\newcounter{theorem}
\newtheorem{thm}[theorem]{Theorem}
\newtheorem{lemma}[theorem]{Lemma}
\newtheorem{prop}[theorem]{Proposition}
\newtheorem{cor}[theorem]{Corollary}
\newtheorem{defn}[theorem]{Definition}
\newtheorem{question}[theorem]{Question}
\theoremstyle{remark}
\newtheorem*{remark*}{Remark}
\newtheorem{remark}[theorem]{Remark}
\newtheorem{example}[theorem]{Example}
\numberwithin{equation}{section}
\numberwithin{theorem}{section}
\newcommand{\e}{\epsilon}
\newcommand{\dl}{\delta}
\newcommand{\R}{\mathbb{R}}
\newcommand{\N}{\mathbb{N}}
\renewcommand{\setminus}{\backslash}
\newcommand{\id}{\mathrm{id}}
\newcommand{\ccite}[2]{\cite[#1]{#2}}
\def\X{\mathcal{X}}
\def\Y{\mathcal{Y}}
\def\Leps{\mathrm{Commut}}
\newcommand{\diam}{\mathrm{diam}}
\begin{document}

\newcommand{\SV}{\mathrm{VL}}
\newcommand{\dn}{\mathrm{dim}_{\mathrm{nuc}}}
\newcommand{\asdim}{\mathrm{asdim}}

\newcommand{\oomega}{\infty}

\title{Relative commutant pictures of Roe algebras}
\author{J\'an \v{S}pakula}
\address{J\'an \v{S}pakula \\
Mathematical Sciences \\
University of Southampton \\
Highfield \\
Southampton, SO17 1BJ \\
United Kingdom}
\email{jan.spakula@soton.ac.uk}
\author{Aaron Tikuisis}
\address{Aaron Tikuisis \\
Department of Mathematics and Statistics \\
University of Ottawa \\
Ottawa, Ontario, K1N 6N5 \\
Canada}
\email{aaron.tikuisis@uottawa.ca}

\begin{abstract}
  Let $X$ be a proper metric space, which has finite asymptotic dimension in the sense of Gromov (or more generally, straight finite decomposition complexity of Dranishnikov and Zarichnyi).
New descriptions are provided of the Roe algebra of $X$: (i) it consists exactly of operators which essentially commute with diagonal operators coming from Higson functions (that is, functions on $X$ whose oscillation tends to $0$ at $\infty$), and (ii) it consists exactly of quasi-local operators, that is, ones which have finite $\e$-propogation (in the sense of Roe) for every $\e>0$.
These descriptions hold both for the usual Roe algebra and for the uniform Roe algebra.
\end{abstract}

\maketitle

\section{Introduction}
The Roe algebra is a C*-algebra constructed from a proper metric space, which encodes ``coarse'' or ``large-scale'' properties of the space (in the sense of Gromov).
In typical applications, the space may be a complete, non-compact Riemannian manifold with bounded geometry, or a finitely generated group with the word metric.
The origins of this construction come from index theory, reflecting the insight that the Roe algebra is large enough to contain indices of many operators with which one wants to do index theory -- such as geometric differential operators -- yet small enough to have interesting and informative $K$-theory.
It plays a central role in the coarse Baum--Connes conjecture, the study (and partial confirmation) of which has been a fruitful endeavor, leading to significant results concerning the Novikov conjecture and the scalar curvature of Riemannian manifolds \cite{Dranishnikov:ScalarCurvatureConjecture, GuentnerKaminker, KasparovYu,Roe:JDG1, Roe:CBMS, Schick:TopologyOfPSC, SkandalisTuYu, Wright:C0CoarseGeometry, Yu:ZeroInTheSpectrumConjecture, Yu:NCfad, Yu:UnifEmbeddableBCC}.
It furthermore appears in work on the essential spectrum of Hamiltonian operators of quantum systems, Schr\"odinger operators, and various other operators, which are affiliated to the appropriate versions of Roe algebras \cite{Georgescu:JFA,Georgescu:arXiv,GeorgescuIftimovici,RRS}.

Roughly, the Roe algebra consists of bounded, locally compact operators on something like $L^2(X)$ (where $X$ is the underlying space) which can be approximated by those with finite propogation.
Here an operator $a$ has ``finite propogation'' if it is localized near to the diagonal; one way of making this precise is, that there exists $R>0$ such that for any $f,f' \in C_b(X)$ (acting on $L^2(X)$ as diagonal operators -- by multiplication), if the supports of $f$ and $f'$ are separated by a distance of at least $R$, then $faf'=0$.
Operators in the Roe algebra are required to be approximated in the \emph{operator norm} by these finite propogation operators.

The Roe algebra is an interesting object to study from an operator algebraic perspective: that is, asking about its structure as an operator algebra, and how this structure relates to the proper metric space from which it is constructed.
For example, Ozawa showed that exactness of a group can be characterized by amenability of the corresponding uniform Roe algebra (\cite{Ozawa:AmenableActions}).
The question of how much of the large--scale structure of a space is remembered by the Roe algebra, was partially answered by JS and Rufus Willett: given two uniformly discrete proper metric spaces with Yu's property A, their Roe algebras are $^*$-isomorphic if and only if the spaces are coarsely equivalent (\cite{SpakulaWillett:Rigidity}).
In \cite{WinterZacharias:NucDim}, Winter and Zacharias showed an interesting one-way connection between the asymptotic dimension of a metric space and the nuclear dimension of the corresponding uniform Roe algebra; the latter is a numerical invariant for amenable C*-algebras which is crucial in recent results in the classification of amenable C*-algebras.
Their result is that the nuclear dimension of the Roe algebra is at most the asymptotic dimension of the underlying uniformly discrete proper metric space, and they asked the (still open) question of whether the reverse inequality also holds.

In this paper, we look at a fundamental question: exactly which operators \emph{are} in the Roe algebra?
In \cite{Roe:CBMS}, Roe defined the concept of finite $\e$-propogation for an operator $a$ on $L^2(X)$, as the following variant of finite propogation: $a$ has finite $\e$-propogation if there exists $R>0$ such that for any $f,f' \in C_b(X)$, if the supports of $f$ and $f'$ are separated by a distance of at least $R$, then $\|faf'\| \leq \e\|f\|\cdot\|f'\|$.
Operators with finite $\e$-propogation for all $\e>0$ have also been called \emph{quasi-local operators} in the literature (originally from \cite[Page 100]{Roe:JDG1}).
It is a straightforward observation that, although limits of finite propogation operators need not have finite propogation, limits of finite $\e$-propogation operators have finite $\e$-propogation (that is, the set of quasi-local operators is norm-closed).
Therefore, all operators in the Roe algebra are quasi-local.

The question we address is the converse: if an operator is quasi-local, is it in the Roe algebra, i.e., is it approximated by operators with finite propogation?
We provide an affirmative answer in the situation that the space has finite asymptotic dimension (as predicted by Roe), and more generally under the hypothesis of straight finite decomposition complexity of Dranishnikov and Zarichnyi \cite{DranishnikovZarichnyi:HaversC}. The latter is a weaker version of the ``classical'' finite decomposition complexity, as defined by Guentner, Tessera, and Yu \cite{GTY:Inv,GTY:GGD}.

A motivation for asking whether quasi-local operators are in the Roe algebra, pointed out to the authors by Alexander Engel, is that whereas indices of genuine differential operators are known to be in the Roe algebra, corresponding arguments only shows that indices of \emph{pseudo-}differential operators (using their most natural representative) are quasi-local (see \cite[Section 2]{Engel:PseudoDO}).
Since the Roe algebra is better studied and understood than the C*-algebra of all quasi-local operators, it is interesting and useful to know that a pseudo-differential operator belongs to it; indeed, our result answers \cite[Question 6.4]{Engel:PseudoDO} under the assumption of straight finite decomposition complexity (this sort of assumption is anticipated in the question).

Additionally, we expose that the above question can be reformulated in terms of essential commutation with Higson functions, or in terms of relative commutants.

So far we have been a bit vague about what we mean by the Roe algebra (hiding behind the phrase ``something like $L^2(X)$'').
This is because in the literature there are two different versions of the Roe algebra, the ``Roe algebra'' and the ``uniform Roe algebra''.
Our results apply to both of these C*-algebras, and indeed our main theorem is formulated in a way that encompasses both, as well as the ``uniform algebra'' $UC^*(X)$.
The main result was proven by Lange and Rabinovich for the uniform Roe algebra of $\mathbb Z^d$ in \cite{LangeRabinovich}.
Engel proved a special case of the result, namely that for discrete groups $G$ that are lattices in a Riemannian manifold with bounded geometry and polynomial volume growth, quasi-local operators that decay sufficiently quickly are in the Roe algebra (\cite[Corollary 2.33]{Engel:Rough}).\footnote{In fact, Engel proved the result for quasi-local operators that decay sufficiently on \emph{any} Riemannian manifold with bounded geometry and polynomial volume growth. For groups, polynomial growth implies virtual nilpotency (\cite{Gromov:PolyGrowth}), which in turn implies finite asymptotic dimension (\cite[Corollary 68]{BellDranishnikov}). To our knowledge, it is not known whether polynomial volume growth implies finite asymptotic dimension (or even (straight) finite decomposition complexity) outside of the case of groups.}

Let us now summarize the argument behind the main result: that quasi-local operators are in the Roe algebra (assuming straight finite decomposition complexity).
Suppose for simplicity that $X$ is a discrete space with asymptotic dimension at most $1$ -- for example a finitely generated free group. This case is much more restricted than finite decomposition complexity, but still difficult enough to allow us to convey the main ideas.
Let $a$ be a quasi-local operator.
Asymptotic dimension at most $1$ will allow us to decompose the space $X$ into $2$ pieces, $X^{(0)}$ and $X^{(1)}$, each piece being a disjoint union of sets that are far apart from each other and uniformly bounded in diameter.
The characteristic functions $e^{(0)},e^{(1)}$ of these pieces produce a partition of unity, and divides $a$ into a sum of four pieces: $e^{(i)}ae^{(j)}$ over $i,j=0,1$.
Each $e^{(i)}ae^{(i)}$ looks roughly like an infinite block matrix, indexed by the pieces from $X^{(i)}$.
The hypothesis that $a$ is quasi-local (finite $\e$-propogation) gives us a lot of control over the norm of the non-diagonal entries of this matrix, and a conditional expectation argument allows us to conclude that $e^{(i)}ae^{(i)}$ is not far away from its ``restriction'' to the diagonal (provided that the pieces in $X^{(i)}$ are sufficiently well separated), see Corollary \ref{cor:CommuteCEEst}.
Since the pieces of the $X^{(i)}$ are uniformly bounded, the operator we get by expecting onto the diagonal has genuinely finite propogation.
An algebraic trick allows us to view the asymmetric pieces $e^{(i)}ae^{(j)}$ as matrices in a similar way, so that we can likewise approximate each of them by finite propogation operators.
In this way, we approximate $a$ as a sum of four operators with finite propogation.

\textit{Outline.} In Section \ref{sec:Defns} we introduce our general setup, with the Roe algebra, the uniform Roe algebra, and the uniform algebra as examples.
We then state the main result, Theorem \ref{thm:MainThm}, in the language of our general setup.
We give some background on asymptotic dimension and (straight) finite decomposition complexity in Section \ref{sec:CoarseGeom}.
The equivalence between quasi-locality and the relative commutant-type property is fairly straightforward, and laid out in Section \ref{sec:IiffII}.
We use a more technical formulation of quasi-locality as a stepping stone towards proving that it implies being in the Roe algebra (assuming straight finite decomposition complexity), a proof that is carried out in Section \ref{sec:IimpliesIV}.
In Section \ref{sec:Higson}, we prove that the relative commutant-type property is equivalent to essential commutation with Higson functions.
The final section, Section \ref{sec:MoreAboutVL}, is concerned with the commutative (but non-separable) C*-algebra $\SV_\oomega(X)$ that arises in our relative commutant-type property, looking at how well it determines $X$ (up to coarse equivalence), and at its nuclear dimension (roughly, the covering dimension of its spectrum).

\textit{Acknowledgments.}
AT was supported by EPSRC EP/N00874X/1.
JS was supported by Marie Curie FP7-PEOPLE-2013-CIG Coarse Analysis (631945).
We would like to thank Ulrich Bunke, Alexander Engel, John Roe, Thomas Weighill, Stuart White, and Rufus Willett for comments and discussion relating to this piece.

\section{Definitions and the main result}
\label{sec:Defns}
Let $A$ be a C*-algebra.
We denote by $A_1$ the closed unit ball of $A$.
For $a,b \in A$ and $\e>0$, we write $a \approx_\e b$ to mean $\|a-b\|\leq \e$.
Define
\[ A_\infty := l^\infty(\mathbb N,A)/\{(a_n)_{n=1}^\infty \in l^\infty(\mathbb N,A): \lim_{n\to\infty} \|a_n\|=0\}, \]
which is a C*-algebra.

We now set up a general situation to which our main result applies, encompassing both Roe algebras and uniform Roe algebras, as well as uniform algebras (see Example \ref{ex:Setup}).
Subsequently, we will state our main result in its full generality (Theorem \ref{thm:MainThm})

\begin{defn}
Let $X$ be a proper metric space.
By an \emph{$X$-module}, we mean a Hilbert space $\mathcal H$ and an injective unital $^*$-homomorphism $C_b(X) \to \mathcal B(\mathcal H)$, which is strictly continuous when viewing $C_b(X)$ and $\mathcal B(\mathcal H)$ as multiplier algebras of $C_0(X)$ and $\mathcal K(\mathcal H)$ respectively.
We shall suppress the $^*$-homomorphism $C_b(X) \to \mathcal B(\mathcal H)$, and treat $C_b(X)$ as a C*-subalgebra of $\mathcal B(\mathcal H)$.

For $R\geq 0$, an operator $a \in \mathcal B(\mathcal H)$ has \emph{propogation at most $R$} if for any $f,f' \in C_b(X)$, if the supports of $f$ and $f'$ are $R$-disjoint then $faf'=0$.
For $R\geq 0$ and $\e>0$, an operator $a \in \mathcal B(\mathcal H)$ has \emph{$\e$-propogation at most $R$} if for any $f,f' \in C_b(X)_1$, if the supports of $f$ and $f'$ are $R$-disjoint then $\|faf'\|<\e$.
An operator $a \in \mathcal B(\mathcal H)$ is \emph{quasi-local} if for every $\e>0$, it has finite $\e$-propogation.
\end{defn}

\begin{defn}
\label{def:BlockCutdown}
Let $X$ be a proper metric space and let $\mathcal H$ be an $X$-module.
Given an equicontinuous family $(e_j)_{j \in J}$ of positive contractions in $C_b(X)$ with pairwise disjoint supports, define the \emph{block cutdown} map $\theta_{(e_j)_{j\in J}}:\mathcal B(\mathcal H) \to \mathcal B(\mathcal H)$ by
\[ \theta_{(e_j)_{j\in J}}(a) := \sum_{j \in J} e_jae_j \]
(using disjointness of the supports and the fact that the family is contractive, the right-hand sum converges in the strong operator topology).

Let $B \subseteq \mathcal B(\mathcal H)$ be a C*-subalgebra such that $C_b(X)BC_b(X)=B$.
$B$ is \emph{closed under block cutdowns} if $\theta_{(e_j)_{j\in J}}(B) \subseteq B$ for every equicontinuous family $(e_j)_{j \in J}$ of positive contractions from $C_b(X)$ with pairwise disjoint supports.
\end{defn}

For an equicontinuous family $(e_j)_{j \in J}$ of positive contractions from $C_b(X)$ with pairwise disjoint supports, the block cutdown map $\theta_{(e_j)_{j\in J}}$ defined above is evidently completely positive and contractive (c.p.c.).
Note that multiplication by $C_b(X)$ commutes with block cutdowns:
\begin{equation*}
f\theta_{(e_j)_{j\in J}}(a) = \theta_{(e_j)_{j\in J}}(fa) \quad \text{and}\quad 
\theta_{(e_j)_{j\in J}}(a)f = \theta_{(e_j)_{j\in J}}(af)
\end{equation*}
for $f \in C_b(X)$ and $a \in \mathcal B(\mathcal H)$.
Also note that
\begin{equation}
\label{eq:BlockNorm}
\|\theta_{(e_j)_{j\in J}}(a)\| = \sup_{j \in J} \|e_jae_j\|.
\end{equation}

Note that, if $(e_j)_{j \in J}$ is an equicontinuous family of positive contractions from $C_b(X)$ with uniformly bounded, pairwise disjoint supports, then $\theta_{(e_j)_{j \in J}}(a)$ has finite propogation, for every $a \in \mathcal B(\mathcal H)$.

\begin{defn}
Let $X$ be a proper metric space, $\mathcal H$ an $X$-module, and let $B \subseteq \mathcal B(\mathcal H)$ be a C*-subalgebra such that $C_b(X)BC_b(X)=B$, and which is closed under block cutdowns.
Define
\begin{enumerate}
\item $\mathrm{Roe}(X,B):=\overline{\{b \in B: b \text{ has finite propogation}\})}^{\|\cdot\|}$, and
\item $\mathcal K(X,B):= \overline{C_0(X)BC_0(X)}$.
\end{enumerate}
If, in addition, we have
\begin{equation}\label{eq:ideal-cond}
[C_0(X),B]\subseteq \mathcal K(X,B),
\end{equation}
we shall call $\mathrm{Roe}(X,B)$ a \emph{Roe-like algebra} of $X$.
\end{defn}

\begin{remark}
  The condition \eqref{eq:ideal-cond} implies that $\mathcal K(X,B)$ is an ideal in $\mathrm{Roe}(X,B)$. It is automatically satisfied in all the examples below, where in fact $C_0(X)B \subseteq \mathcal K(X,B)$ (and $\mathcal K(X,B)$ turns out to be the ideal of compact operators). Finally, it is not needed for the substantial part of this piece, so we shall explicitly refer to it when needed.
\end{remark}

\begin{example}
\label{ex:Setup}
Let $X$ be a uniformly discrete proper metric space.
Let $\mathcal H'$ be an infinite dimensional, separable Hilbert space.
Set $\mathcal H_u:=l^2(X)$ and $\mathcal H:=l^2(X,\mathcal H')$; $C_b(X)$ acts on both of these by pointwise multiplication, making them $X$-modules.
\medskip

(i)
With $B_u:=\mathcal B(\mathcal H_u)$, we see that $C_b(X)B_uC_b(X) = B_u$, and $B_u$ is closed under block cutdowns.
In this case, $\mathrm{Roe}(X,B_u)=C^*_u(X)$, the uniform Roe algebra, and $\mathcal K(X,B_u)=\mathcal K(\mathcal H_u)$.
Since $C_0(X) \subseteq \mathcal K(\mathcal H_u)$, it follows that $C_0(X)B_u\subseteq \mathcal K(\mathcal H_u) = \mathcal K(X,B_u)$.
\medskip

(ii)
Set $B$ equal to the set of all $b \in \mathcal B(\mathcal H)$ which are locally compact, in the sense that for every $f \in C_0(X)$,
\[ fb, bf \in \mathcal K(\mathcal H). \]
We see that $C_b(X)BC_b(X) = B$, and $B$ is closed under block cutdowns.
Then $\mathrm{Roe}(X,B)=C^*(X)$, the Roe algebra, and $\mathcal K(X,B)=\mathcal K(\mathcal H)$.
\medskip

(iii)
Assume that $X$ has bounded geometry.
Set $B_0$ equal to the closure of the set of all $b =(b_{x,y})_{x,y \in X} \in \mathcal B(\mathcal H)$ for which the rank of $b_{x,y} \in \mathcal B(\mathcal H')$ is uniformly bounded.
When $b=(b_{x,y})_{x,y\in X} \in \mathcal B(\mathcal H)$ has entries with rank bounded by $k$, then so does any block cutdown map applied to $b$.
Since each block cutdown map is continuous, it follows that $B_0$ is closed under block cutdowns.
Continuity of multiplication ensures that 
\[ C_b(X)B_0C_b(X) = B_0. \]
When $X$ has bounded geometry, then $\mathrm{Roe}(X,B_0)=UC^*(X)$, the uniform algebra of $X$, defined as the closure of finite propogation operators $b = (b_{x,y})_{x,y\in X} \in \mathcal B(\mathcal H)$ for which the rank of $b_{x,y}$ is uniformly bounded.

To see this, it is clear that $\mathrm{Roe}(X,B_0)$ contains $UC^*(X)$.
To show $\mathrm{Roe}(X,B_0) \subseteq UC^*(X)$, it suffices to check that every finite propogation operator $a \in B_0$ is contained in $UC^*(X)$.
For such $a$, say its propogation is less than $R >0$.
Set
\[ K:=\sup_{x \in X} |B_R(x)|, \]
which is finite due to the hypothesis of bounded geometry.
Define $E_R:\mathcal B(\mathcal H) \to \mathcal B(\mathcal H)$ by $E_R\left((b_{x,y}\right)_{x,y\in X}) := (c_{x,y})_{x,y \in X}$ where 
\[ c_{x,y} := \begin{cases} b_{x,y}, \quad &d(x,y) < R; \\ 0,\quad &d(x,y) \geq R. \end{cases} \]
Note that $\left\|E_R\left((b_{x,y})_{x,y\in X}\right)\right\| \leq K\left\|(b_{x,y})_{x,y\in X}\right\|$ (this is a straightforward argument, see e.g., the proof of \cite[Lemma 8.1]{WinterZacharias:NucDim}), so that in particular, $E_K$ is continuous.
Also note that $E_K(a)=a$.
Since $a \in B_0$, it is a limit of a sequence of operators $b_n=(b^n_{x,y})_{x,y\in X}$ such that for each $n$, there exists $K_n$ bounding the rank of $b^n_{x,y}$ over all $x,y\in X$.
Thus the same bound $K_n$ applies to $E_K(b_n)$ so that $E_K(b_n) \in UC^*(X)$.
By continuity of $E_K$, $a=\lim_{n \to \infty} E_K(b_n) \in UC^*(X)$.

In this example, we also have $\mathcal K(X,B_0)=\mathcal K(\mathcal H)$, and since $B_0 \subseteq B$ (from (ii)), $C_0(X)B \subseteq \mathcal K(\mathcal H) = \mathcal K(X,B_0)$
\medskip

(iv)
Generalizing (ii), let $X$ be any proper metric space and let $\mathcal H$ be an adequate $X$-module in the sense of \cite[Definition 3.4]{Roe:CBMS}.
Recall that an operator $b \in \mathcal B(\mathcal H)$ is \emph{locally compact} if $C_0(X)b, bC_0(X) \subseteq \mathcal K(\mathcal H)$.
Set $B$ equal to the set of all locally compact, bounded operators.
One can easily see that $C_b(X)BC_b(X)=B$; it is also true that $B$ is closed under block cutdowns.

To see this, let $b \in \mathcal B(\mathcal H)$ be locally compact with $\|b\|\leq1$, let $(e_j)_{j \in J}$ be an equicontinuous family of positive contractions in $C_b(X)$ with pairwise disjoint supports, and set $b':= \theta_{(e_j)_{j \in J}}(b)$, which we must prove is locally compact.
As $\mathcal K(\mathcal H)$ is closed, it suffices to show that for any $f \in C_c(X)$ with $\|f\|\leq1$, $fb', b'f \in \mathcal K(\mathcal H)$.
Given $\e>0$, note that
\[ b' \approx_{2\e} \theta_{((e_j-\e)_+)_{j \in J}}(b), \]
where $(e_j-\e)_+ \in C_b(X)$ is given by $(e_j-\e)_+(x):=\max\{e_j(x)-\e,0\}$.
By equicontinuity and pairwise disjointness of the family $(e_j)$, we may choose $\dl$ such that if $d(x,y)<\dl$ and $j \neq j'$, then at most one of $e_j(x)$ or $e_{j'}(y)$ can be nonzero.
Thus if $f \in C_c(X)$, then by compactness of its support, there are only finitely many $j$ for which $f(e_j-\e)_+\neq 0$.
Consequently,
\begin{align*}
fb' 
&\approx_{2\e} f\theta_{((e_j-\e)_+)_{j\in J}}(b)  \\
&= f\sum_{j \in J}(e_j-\e)_+b(e_j-\e)_+ \\
&= \sum_j f(e_j-\e)_+b(e_j-\e)_+,
\end{align*}
and as this is a finite sum of elements of $\mathcal K(\mathcal H)$, it is itself in $\mathcal K(\mathcal H)$.
As $\mathcal K(\mathcal H)$ is closed and $\e>0$ is arbitrary, it follows that $fb' \in \mathcal K(\mathcal H)$.
Likewise, $b'f \in \mathcal K(\mathcal H)$, establishing that $b'$ is locally compact, and therefore that $B$ is closed under block cutdowns.

In this example, we get $\mathrm{Roe}(X,B)=C^*(X)$, the Roe algebra, and $\mathcal K(X,B)=\mathcal K(\mathcal H)=C_0(X)B$.
\end{example}
\bigskip

\begin{defn}
Let $X$ be a metric space.
A bounded sequence $\left(f_n\right)_{n=1}^\infty$ from $C_b\left(X\right)$ is \emph{very Lipschitz} if, for every $L>0$, there exists $n_0$ such that $f_n$ is $L$-Lipschitz for all $n \geq n_0$.
Let $\SV\left(X\right)$ denote the set of all very Lipschitz bounded sequences from $C_b\left(X\right)$.
Define 
\[ \SV_\infty\left(X\right):= \SV\left(X\right)/\{\left(f_n\right)_{n=1}^\infty \in \SV\left(X\right) \mid \lim_{n\to\infty} \|f_n\| = 0\}. \]
\end{defn}
\bigskip

$\SV\left(X\right)$ is a $\mathrm C^*$-subalgebra of $l^\infty\left(\mathbb N,C_b\left(X\right)\right)$,\footnote{To check that the product of two very Lipschitz sequences is itself very Lipschitz, use the fact that if $f,g$ are bounded functions, such that $f$ is $L$-Lipschitz and $g$ is $L'$-Lipschitz, then $fg$ is $(\|f\|L'+\|g\|L)$-Lipschitz.}
and therefore the quotient $\SV_\oomega\left(X\right)$ is a C*-subalgebra of $(C_b(X))_\infty$.

E.g., if $X$ is a finitely generated group $G$ with the word metric, then $\SV_\oomega\left(X\right)$ can be identified with the fixed point algebra of $l^\infty\left(G\right)_\oomega$ under the action of $G$ induced by left translation on $l^\infty\left(G\right)$.
\bigskip

Recall the following definition from \cite{Roe:LectOnCoarseGeom}.

\begin{defn}
Let $X$ be a proper metric space.
A function $g \in C_b(X)$ is a \emph{Higson function} (also called a \emph{slowly oscillating} function) if, for every $R>0$ and $\e>0$, there exists a compact set $A \subseteq X$ such that for $x,y \in X \setminus A$, if $d(x,y) < R$ then $|g(x)-g(y)|<\e$.
The set of all Higson functions on $X$ is denoted $C_h(X)$.
\end{defn}
\bigskip

E.g., if $X$ is a finitely generated group $G$ with the word metric, then $C_h(X)\subseteq l^\infty(X)$ is the preimage of the fixed point algebra of $l^\infty\left(G\right)/c_0(G)$ under the action of $G$ induced by left translation on $l^\infty\left(G\right)$.
\bigskip

In the following, $\mathcal H$ is an $X$-module, and we view both $\SV_\oomega\left(X\right)$ and $B \subseteq \mathcal B\left(\mathcal H\right)$ as C*-subalgebras of $\mathcal B\left(\mathcal H\right)_\oomega$, and consider the relative commutant
\[ B \cap \SV_\oomega\left(X\right)'. \]
It is easy to see (at least in the standard cases of Example \ref{ex:Setup}) that any finite propogation operator commutes with $\SV_\oomega\left(X\right)$, and by taking limits it follows that
\[ \mathrm{Roe}(X,B) \subseteq B \cap \SV_\oomega\left(X\right)'. \]

The main result is as follows.
Recall that straight finite decomposition complexity, as introduced in \cite{DranishnikovZarichnyi:HaversC}, is a weakening of finite asymptotic dimension (\cite[Theorem 4.1]{GTY:GGD}). Both properties are defined in the following subsection.

\begin{thm}
\label{thm:MainThm}
Let $X$ be a proper metric space, $\mathcal H$ an $X$-module, and let $B \subseteq \mathcal B(\mathcal H)$ be a C*-subalgebra such that $C_b(X)BC_b(X)=B$, which is closed under block cutdowns, and such that \eqref{eq:ideal-cond} holds.
For $b \in B$, the following are equivalent.
\begin{enumerate}
\item $[b,f] = 0$ for all $f \in \SV_\oomega(X)$;
\item $b$ is quasi-local (it has finite $\e$-propogation for every $\e>0$);
\item $[b,g] \in \mathcal K(X,B)$ (i.e., $b$ essentially commutes with $g$) for all $g \in C_h(X)$.
\end{enumerate}
If $X$ has straight finite decomposition complexity, then these are also equivalent to
\begin{enumerate}
\item[(iv)] $b \in \mathrm{Roe}(X,B)$.
\end{enumerate}
\end{thm}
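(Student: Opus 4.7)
The plan is to prove (iv) $\Rightarrow$ (i), (ii), (iii), then establish the equivalences (i) $\Leftrightarrow$ (ii) $\Leftrightarrow$ (iii), and finally close with (ii) $\Rightarrow$ (iv) under sFDC. Only the last implication is substantial; the rest are ``soft'' arguments.

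For (iv) $\Rightarrow$ (i), (ii), (iii), by norm-closure of each target condition it suffices to treat an operator $a \in B$ of finite propagation $R$. Condition (ii) is immediate. For (i), a very Lipschitz sequence $(f_n)$ has Lipschitz constants $L_n \to 0$, and a direct local estimate yields $\|[a, f_n]\| = O(RL_n\|a\|) \to 0$, so $[a, (f_n)] = 0$ in $B_\infty$. For (iii), given $g \in C_h(X)$ and $\e > 0$, the Higson property supplies a compact $A \subseteq X$ outside of which $g$ oscillates by less than $\e$ at scale $R$; writing $g = \chi g + (1-\chi) g$ for a cutoff $\chi \in C_c(X)$ with $\chi|_A = 1$, the ideal condition \eqref{eq:ideal-cond} puts $[a, \chi g] \in \mathcal K(X, B)$, and a propagation-$R$ estimate bounds $\|[a, (1-\chi) g]\|$ by $O(\e\|a\|)$.

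For (i) $\Leftrightarrow$ (ii), the direction (ii) $\Rightarrow$ (i) uses a similar local estimate, approximating $f_n$ at scale $R_\e$ (the $\e$-propagation scale of $b$) and invoking $\e$-propagation to bound $\limsup_n \|[b, f_n]\| \lesssim \e\|b\|$, letting $\e \to 0$. Conversely, by contrapositive, if $b$ is not quasi-local, fix $\e > 0$ and $f_n^{(1)}, f_n^{(2)} \in C_b(X)_1$ with $n$-separated supports and $\|f_n^{(1)} b f_n^{(2)}\| \geq \e$, and set $h_n(x) := (1 - d(x,\supp f_n^{(1)})/n)_+ \in C_b(X)_1$: this is $(1/n)$-Lipschitz with $h_n|_{\supp f_n^{(1)}} = 1$ and $h_n|_{\supp f_n^{(2)}} = 0$, so $(h_n) \in \SV(X)$, and using $f_n^{(1)} h_n = f_n^{(1)}$ and $h_n f_n^{(2)} = 0$ one computes $f_n^{(1)}[b, h_n] f_n^{(2)} = -f_n^{(1)} b f_n^{(2)}$, whence $\|[b, h_n]\| \geq \e$ contradicts (i). The equivalence (ii) $\Leftrightarrow$ (iii) runs along parallel lines, with distance-based Higson functions playing the role of the very Lipschitz sequences and the argument taking place modulo $\mathcal K(X,B)$ via \eqref{eq:ideal-cond} rather than in the sequence algebra.

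The substance lies in (ii) $\Rightarrow$ (iv). The core quantitative input (promised by Corollary \ref{cor:CommuteCEEst}) will be: for an equi-Lipschitz family $(e_j)$ of positive contractions in $C_b(X)$ with pairwise $R$-separated supports, if $b$ is quasi-local with $\e$-propagation at most $R$, then $\theta_{(e_j)}(b)$ is within $O(\e)$ of the compression $\left(\sum_j e_j\right) b \left(\sum_j e_j\right)$; if moreover the supports are uniformly bounded in diameter, $\theta_{(e_j)}(b)$ has finite propagation. To leverage this, unroll sFDC into a recursion: prescribing scales $R_1 \ll R_2 \ll \cdots$ in advance, $X$ decomposes at scale $R_1$ into finitely many subspaces $X^{(0)}, \ldots, X^{(n_1)}$, each a disjoint union of pieces separated at scale $R_1$ which themselves decompose analogously at scale $R_2$, and so on, terminating in uniformly bounded pieces after finitely many iterations. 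Using an equi-Lipschitz partition of unity $\{e^{(i)}\}$ subordinate to the top cover, split $b = \sum_{i,j} e^{(i)} b e^{(j)}$: diagonal terms $e^{(i)} b e^{(i)}$ are handled by block cutdown along the pieces of $X^{(i)}$, and off-diagonal terms $e^{(i)} b e^{(j)}$ ($i \neq j$) by an algebraic matrix-embedding trick which reduces them to a similar block-cutdown setup. Iterating with the $R_k$ chosen in advance so that the cumulative errors are summable and $< \e$, the process terminates (by sFDC) with a finite-propagation approximation. The main anticipated obstacle is the recursion bookkeeping: sFDC supplies decompositions one scale at a time, so the entire sequence $R_1, R_2, \ldots$ must be chosen before any decomposition is invoked, which requires anticipating the $\e'$-propagation function of $b$ and distributing the error budget across an a priori unknown finite number of recursion levels; the off-diagonal matrix-embedding trick, which relies on closure of $B$ under block cutdowns and $C_b(X)$-multiplication to keep all manipulations within the intended algebra, is the other technical pressure point.
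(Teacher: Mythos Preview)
Your proposal is correct and follows essentially the same strategy as the paper, including the key use of Corollary~\ref{cor:CommuteCEEst}, the diagonal/off-diagonal splitting, and the sFDC recursion with pre-chosen scales and geometrically summable error budget for (ii)~$\Rightarrow$~(iv).

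The one place where you underestimate the work is the equivalence with (iii). The direction (iii)~$\Rightarrow$~(i) (equivalently (iii)~$\Rightarrow$~(ii)) is not quite the parallel one-liner your sketch suggests: in the paper it occupies roughly two pages and requires a genuine case split. Given a very Lipschitz sequence $(f_k)$ with $\|[b,f_k]\|\geq\e$, one must manufacture a \emph{single} Higson function $g$ with $[b,g]\notin\mathcal K(X,B)$, and the obstruction is that the ``bad'' part of each $[b,f_k]$ may drift: either some fixed ball $B_{R_0}$ sees large interaction with arbitrarily distant regions (Case~1), or the mass of $[b,f_k]$ escapes every fixed ball (Case~2). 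The two cases are handled by different patching constructions $g=\sum_i f_{k_i}(e_{R_i}-e_{3R_{i-1}})$ with recursively chosen radii $R_i\geq 6R_{i-1}$ and subsequence $(k_i)$, together with a preliminary reduction from Higson to Lipschitz--Higson functions. Your phrase ``distance-based Higson functions playing the role of the very Lipschitz sequences'' points at the right object, but the argument is not a routine transcription of the (i)~$\Leftrightarrow$~(ii) proof.
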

\bigskip

The equivalence of (i) and (ii) is fairly straightforward, and the equivalence of these conditions with (iii) (at least in the standard cases of Example \ref{ex:Setup}) seems to be known by coarse geometers; we shall provide a detailed proof for completeness.
The implication (iv) $\implies$ (ii) is straightforward and holds in complete generality.

The implication (i) $\Rightarrow$ (iv) was proven by Lange and Rabinovich for the uniform Roe algebra of $\mathbb Z^d$ (i.e., the case $X=\mathbb Z^d$, $\mathcal H=l^2(X)$, and $B=\mathcal B(\mathcal H)$ as in Example \ref{ex:Setup} (i)) in \cite{LangeRabinovich} (see \cite[Proposition 8]{RRS} for a proof in English).

The result (ii) $\Rightarrow$ (iv) was claimed by Roe in a remark on page 20 of \cite{Roe:CBMS} under a ``finite dimensionality'' assumption, but it was later found that his supposed proof was incomplete (\cite{Roe:Email}).
The present paper is to the authors' knowledge the first complete proof of a more general case (which is even more general than finite asymptotic dimension).

\begin{question}
Is there a uniformly discrete countable metric space with bounded geometry, for which (i)-(iii) does not imply (iv) of Theorem \ref{thm:MainThm}?
\end{question}
\bigskip

\subsection{Coarse geometric notions}
\label{sec:CoarseGeom}

We collect some terminology from \cite{GTY:Inv,GTY:GGD,DranishnikovZarichnyi:HaversC}.

\begin{defn}\label{def:sfdc}
  Let $X$ be a proper metric space, let $Z,Z' \subseteq X$, let $\X$ and $\Y$ be \emph{metric families} (i.e.\ at most countable sets of subsets of $X$), and finally let $R\geq0$.
  \begin{itemize}
  	\item We shall say that $\X$ is \emph{uniformly bounded}, if $\sup_{Y\in\X}\diam(Y) < \infty$.
  	\item We shall denote the \emph{metric neighbourhood} of $Z$ of radius $R$ by $N_R(Z):=\{z\in X\mid d(z,Z)\leq R\}$. We further set
	\[ N_R(\X) := \{N_R(Y): Y \in \X\}. \]
	\item The \emph{distance} between $Z$ and $Z'$ is $d(Z,Z') := \inf \{d(z,z'): z \in Z, z' \in Z'\}$.
	\item A family $(Y_j)_{j \in J}$ is \emph{$R$-disjoint} if $d(Y_j,Y_{j'}) > R$ for all $j\neq j'$; we write
	\[ \bigsqcup_{R\text{-disjoint}}Y_{j} \]
	for the union of the $Y_j$ to indicate that the family is $R$-disjoint.
  	\item We say that $Z$ \emph{$R$-decomposes over $\Y$}, if we can decompose $Z=X_0\cup X_1$ and
  	$$
  	X_i = \bigsqcup_{R\text{-disjoint}}X_{ij},\quad i=0,1,
  	$$
  	such that $X_{ij}\in \Y$ for all $i,j$.
  	\item We say that $\X$ \emph{$R$-decomposes over} $\Y$, denoted $\X\xrightarrow{\,R\,}\Y$, if every $Y\in\X$ $R$-decomposes over $\Y$.
  	\item We say that $X$ has \emph{asymptotic dimension at most $n$}, if for every $r\geq0$, we can decompose $X=X_0\cup \dots\cup X_n$ and
  	$$
  	X_i = \bigsqcup_{r\text{-disjoint}}X_{ij},\quad i=0,\dots,n,
  	$$
  	such that the metric family $\{X_{ij}\mid i,j\}$ is uniformly bounded.
  	\item We say that $X$ has \emph{straight finite decomposition complexity}, if for any sequence $0\leq R_1<R_2<\cdots$, there exists $m\in \N$ and metric families $\{X\} = \X_0, \X_1,\dots, \X_m$, such that $\X_{i-1}\xrightarrow{\,R_{i}\,}\X_i$ for $i=1,\dots, m$, and the family $\X_m$ is uniformly bounded.
  \end{itemize}
\end{defn}

The notion of straight finite decomposition complexity (sFDC) \cite{DranishnikovZarichnyi:HaversC} is apriori weaker than the original notion of finite decomposition complexity of Guentner, Tessera and Yu \cite{GTY:Inv,GTY:GGD}, see \ccite{Proposition 2.3}{DranishnikovZarichnyi:HaversC}. The definition of finite decomposition complexity uses a certain ``decomposition game'', which effectively means that the choices of $R_i$ can depend on the previous decompositions $\X_1,\dots,\X_{i-1}$.

Already finite decomposition complexity is weaker than finite asymptotic dimension (\cite[Theorem 4.1]{GTY:GGD}).

\section{Proof of (i) \texorpdfstring{$\Leftrightarrow$}{iff} (ii)}
\label{sec:IiffII}

To prove the main result, we begin with a technical-looking characterization of condition (ii).

\begin{lemma}
\label{lem:CommutantChar1}
Let $X$ be a proper metric space, let $\mathcal H$ be an $X$-module, and let $a \in \mathcal B(\mathcal H)$.
Then $\|[a,f]\|<\e$ for every $f \in \SV_\oomega\left(X\right)_1$ if and only if there exists $L>0$ such that $\|[a,f]\| < \e$ whenever $f \in C_b(X)_1$ is $L$-Lipschitz.
\end{lemma}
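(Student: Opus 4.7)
I would prove each implication via a direct sequence-level argument, using only the definition of $\SV_\oomega(X)$ and the fact that the norm on $\mathcal B(\mathcal H)_\oomega$ is given by $\limsup_n$ of the representing sequence.

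For the ``if'' direction, suppose $L>0$ is such that $\|[a,f]\|<\e$ for every $L$-Lipschitz $f \in C_b(X)_1$. Given $f \in \SV_\oomega(X)_1$, lift it to some bounded $(f_n) \in \SV(X)$, and replace each $f_n$ by $f_n/\max(\|f_n\|,1)$, which is still very Lipschitz (scaling by a factor in $[0,1]$ does not increase Lipschitz constants) and which differs from $(f_n)$ by a sequence tending to $0$ in norm. So we may assume $\|f_n\| \leq 1$ for every $n$. By the very-Lipschitz hypothesis, there is $n_0$ such that $f_n$ is $L$-Lipschitz for $n \geq n_0$; the assumption then gives $\|[a,f_n]\| < \e$ for all such $n$, whence $\|[a,f]\| = \limsup_n\|[a,f_n]\| \leq \e$.

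For the ``only if'' direction, I would argue by contraposition. Assume no such $L$ exists; then for every $n \in \N$ there is an $f_n \in C_b(X)_1$ which is $(1/n)$-Lipschitz and satisfies $\|[a,f_n]\| \geq \e$. The sequence $(f_n)$ is bounded by $1$, and for any $L > 0$ one has that $f_n$ is $L$-Lipschitz once $n \geq 1/L$, so $(f_n) \in \SV(X)$ represents some $f \in \SV_\oomega(X)_1$. Then $\|[a,f]\| = \limsup_n \|[a,f_n]\| \geq \e$, contradicting the hypothesis that $\|[a,f]\| < \e$ for every $f \in \SV_\oomega(X)_1$.

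\textbf{Where I expect friction.} The only genuine issue is the minor mismatch between strict inequality $<\e$ in the hypothesis and the non-strict inequality $\leq \e$ produced by the $\limsup$ argument in the ``if'' direction. This gap is cosmetic: one can, for example, verify the equivalence at every threshold $\e' < \e$ simultaneously, since both conditions are monotone in $\e$, or use that $C_b(X)_1$ embeds diagonally into $\SV_\oomega(X)_1$ so that the two suprema agree. No serious obstacle arises; the whole argument is essentially bookkeeping around the definition of the $\limsup$-algebra $\mathcal B(\mathcal H)_\oomega$ and the very-Lipschitz property.
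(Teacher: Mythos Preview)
Your argument matches the paper's almost exactly: the paper proves the forward direction by the same contradiction (build a $(1/n)$-Lipschitz sequence with $\|[a,f_n]\|\geq\e$ and observe it lies in $\SV(X)$), and dismisses the reverse direction as ``immediate from the definition of $\SV(X)$'', which is precisely your sequence-level computation.

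One small correction to your ``friction'' paragraph: your second proposed fix does not work. The diagonal embedding $C_b(X)_1\hookrightarrow \SV_\oomega(X)_1$ only lands in $\SV_\oomega(X)$ for \emph{constant} functions, since a constant sequence $(f,f,\dots)$ is very Lipschitz iff $f$ is $L$-Lipschitz for every $L>0$. So the two suprema need not agree via that route. Your first fix (apply the argument at every $\e'<\e$) also does not close the gap on its own, because the hypothesis only hands you a single $L$ at the single threshold $\e$. The honest statement one gets from your argument is $\|[a,f]\|\leq\e$ on the $\SV_\oomega$ side; this is exactly what the paper's applications need (they quantify over all $\e>0$, so $\leq$ versus $<$ is immaterial), and the paper's one-line ``immediate'' glosses over the same point you noticed.
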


\begin{remark}
\label{rem:L-epsilon-notation}
As we shall need to refer to the conclusion of the above lemma later, we shall fix the following notation. In the setup as in the above lemma, we write $a\in \Leps(L,\e)$ if $\|[a,f]\| < \e$ whenever $f \in C_b(X)_1$ is $L$-Lipschitz.
\end{remark}

\begin{proof}
The reverse implication is immediate from the definition of $\SV\left(X\right)$.
For the forward direction, we use a proof by contradiction.
Suppose for a contradiction that, for every $n$ there exists $f_n \in C_b\left(X\right)_1$ that is $(1/n)$-Lipchitz and $\|[a,f_n]\| \geq \e$.

Then evidently, $\left(f_n\right)_{n=1}^\infty \in \SV\left(X\right)$ yet $\lim_\oomega \|[a,f_n]\| \geq \e$.
This contradicts the hypothesis that $\|[a,\SV_\oomega\left(X\right)_1]\| < \e$.
\end{proof}
\bigskip

\begin{proof}[Proof of Theorem \ref{thm:MainThm} (i) $\Rightarrow$ (ii)]
Suppose that $\left[b,\SV_\oomega\left(X\right)_1\right]=0$ and let $\e>0$.
By Lemma \ref{lem:CommutantChar1}, let $b \in \Leps(L,\e)$ (in the notation of Remark \ref{rem:L-epsilon-notation}) for some $L>0$.

We claim that $b$ has $\e$-propogation at most $L^{-1}$.
Certainly, suppose that $f,f' \in C_b\left(X\right)_1$ have $L^{-1}$-disjoint supports.
We may define $g \in C_b\left(X\right)$ such that $g|_{\mathrm{supp} f} \equiv 1$, $g|_{\mathrm{supp} f'} \equiv 0$ and $g$ is $L$-Lipschitz.
Hence, $\|[b,g]\| < \e$.
Consequently,
\[ \|fbf'\| = \|fgbf'\| \leq \|[b,g]\| + \|fbgf'\| < \e+0, \]
as required.
\end{proof}
\medskip

\begin{proof}[Proof of Theorem \ref{thm:MainThm} (ii) $\Rightarrow$ (i)]
Suppose that $b$ has finite $\e$-propogation for all $\e>0$.
Assume that $b$ is a contraction.
We shall verify the condition in Lemma \ref{lem:CommutantChar1}.
Therefore, let $\e>0$ be given.
Pick $N$ such that $6/N < \e/2$.
By the hypothesis, let $b$ have $(\e/\left(2N^2\right))$-propogation at most $R>0$.

Let $f \in C_b\left(X\right)_1$ be $(2RN)^{-1}$-Lipschitz.
We claim that $\|[b,f]\|<\e$.
Surely, define sets
\[ A_1 := f^{-1}\left([0,\tfrac1N]\right), \quad A_i := f^{-1}\left(\left(\tfrac{i-1}N,\tfrac iN\right]\right), \quad i=2,\dots,N. \]
These sets partition $X$ and, for $|i-j|>1$, $A_i$ is $(2R)$-disjoint from $A_j$.
We may find a partition of unity $e_1,\dots,e_N \in C_b(X)$ such that $e_i$ is supported in $N_{R/2}(A_i)$.
It follows that the supports of $e_i$ and $e_j$ are $R$-disjoint when $|i-j|>1$.

Thus,
\begin{align}
\label{eq:AiaAjSmall}
\|e_ib e_j\| &< \tfrac\e{2N^2}.
\end{align}
Also,
\begin{equation}
\label{eq:AiApproxf} f \approx_{1/N} \sum_{i=1}^N \tfrac iN e_i
\end{equation}
and so
\begin{eqnarray*}
\|[f,b]\| &\stackrel{\eqref{eq:AiApproxf}}\leq& \tfrac2N + \|[\sum_{i=1}^N \tfrac iN e_i,b]\| \\
&=& \tfrac2N + \left\| \left(\sum_{i=1}^N \tfrac iN e_ib\right)\left(\sum_{j=1}^N e_j\right) - 
\left(\sum_{i=1}^N e_i\right)\left(\sum_{j=1}^N \tfrac jN be_j\right)\right\| \\
&=& \tfrac2N + \left\| \sum_{i,j=1}^N \left(\tfrac iN - \tfrac jN\right) e_ibe_j\right\| \\
&\leq& \tfrac2N + \sum_{|i-j|>1} \|e_ibe_j\| + \left\| \sum_{|i-j|\leq 1} \left(\tfrac iN - \tfrac jN\right) e_ibe_j\right\|.
\end{eqnarray*}
The terms of the first sum are each dominated by $\frac\e{2{N^2}}$ by \eqref{eq:AiaAjSmall}, so this entire sum is less than $\e/2$.
The second sum can be broken into 4 sums with orthogonal terms (namely, note that when $i=j$, the terms vanish; what remains is $j=i+1$ and $j=i-1$, and we break each of these into even and odd parts).
Each of the terms of the second sum has norm at most $1/N$; thus, we have
\[
\|[f,b]\| < \tfrac2N + \tfrac\e2 + \tfrac4N < \e, \]
as required.
\end{proof}
\bigskip

\section{Proof of (i) \texorpdfstring{$\Rightarrow$}{implies} (iv)}
\label{sec:IimpliesIV}

In this section, we prove that (i) $\Rightarrow$ (iv) in Theorem \ref{thm:MainThm}.
We begin by establishing a few general functional analytic facts.

Recall that the \emph{strong* topology} on $\mathcal B(\mathcal H)$ is the one in which a net $(a_\alpha)$ converges to $a \in \mathcal B(\mathcal H)$ if and only if both $a_\alpha \to a$ and $a_\alpha^* \to a^*$ in the strong operator topology (i.e., $\|a_\alpha \xi - a\xi\| \to 0$ and $\|a_\alpha^* \xi - a^* \xi\| \to 0$ for every $\xi \in \mathcal H$).
A \emph{conditional expectation} from C*-algebra $A$ to a C*-subalgebra $B$ is a completely positive and contractive projection $E$ from $A$ to $B$ satisfying
\[ E(b_1ab_2) = b_1E(a)b_2 \]
for all $b_1,b_2 \in B$ and $a \in A$.

\begin{lemma}
\label{lem:CompactCE}
Let $\mathcal H$ be a Hilbert space and let $G$ be a subgroup of the group of unitary operators, which is compact in the strong$^*$ topology.
Then there is a unique conditional expectation $E_G:\mathcal B(\mathcal H) \to G'$ whose restriction to the unit ball is weak operator topology continuous.
It satisfies
\begin{equation}
\label{eq:CompactCE}
\left \|E_G(a)-a\right\| \leq \sup_{u \in G} \left\|[a,u]\right\|, \quad a \in \mathcal B(\mathcal H).
\end{equation}
\end{lemma}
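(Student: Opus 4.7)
The plan is to construct $E_G$ as an integral against the Haar measure on $G$. First I would verify that $G$, equipped with the relative strong$^*$ topology, is a topological group: inversion is trivially continuous, and joint continuity of multiplication on the unit ball of unitaries is a short $\varepsilon/2$ argument using $\|u_\alpha v_\alpha \xi - uv\xi\| \leq \|(v_\alpha - v)\xi\| + \|(u_\alpha - u)(v\xi)\|$ (and similarly for adjoints). Since $G$ is assumed compact, it therefore admits a unique normalised Haar measure $\mu$.

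Next I would define $E_G(a)$ by the weak integral
\[
\langle E_G(a)\xi,\eta\rangle := \int_G \langle uau^*\xi,\eta\rangle\,d\mu(u),\qquad \xi,\eta\in\mathcal H.
\]
The integrand is continuous in $u$ because $u\mapsto u^*\xi$ and $u\mapsto u^*\eta$ are strong$^*$-continuous, so the integral makes sense and defines a bounded sesquilinear form of norm $\leq \|a\|$, hence a unique $E_G(a)\in\mathcal B(\mathcal H)$. Complete positivity and contractivity follow since each $a\mapsto uau^*$ is a $^*$-automorphism and $\mu$ is a probability measure. Left-invariance of $\mu$ gives $vE_G(a)v^* = E_G(a)$ for $v\in G$, so $E_G(a)\in G'$; for $a\in G'$ the integrand is constantly $a$, so $E_G$ restricts to the identity on $G'$, making it a projection. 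Bimodularity over $G'$ is immediate from $u b_1 a b_2 u^* = b_1\, uau^*\, b_2$ for $b_1,b_2\in G'$ and $u\in G$.

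For the norm estimate, I would use $uau^* - a = [u,a]u^*$, so for unit vectors $\xi,\eta$,
\[
|\langle (E_G(a)-a)\xi,\eta\rangle| \leq \int_G \|[u,a]u^*\|\,d\mu(u) \leq \sup_{u\in G}\|[u,a]\|,
\]
which yields \eqref{eq:CompactCE} after taking the supremum over $\xi,\eta$. Weak operator continuity of $E_G$ on the unit ball is a consequence of dominated convergence applied to the defining integral.

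The uniqueness statement is the subtlest point and where I expect the main (small) obstacle. Given another conditional expectation $F$ onto $G'$ that is WOT-continuous on the unit ball, one wants to compare $F$ with $E_G$. The key observation is that $F$ commutes with the Haar integral: for $a$ in the unit ball, WOT-continuity of $F$ together with a standard approximation of the integral by convex combinations (Riemann-type sums, using continuity of $u\mapsto uau^*$ in WOT) gives $F(E_G(a)) = \int_G F(uau^*)\,d\mu(u)$. Since $F(a)\in G'$ is fixed under conjugation by $G$, the integrand collapses to $F(a)$, yielding $F(E_G(a))=F(a)$. But $E_G(a)\in G'$, so $F(E_G(a)) = E_G(a)$, and therefore $F(a)=E_G(a)$.
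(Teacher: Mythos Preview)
Your approach is essentially identical to the paper's: average against Haar measure and verify the properties. One small wrinkle: dominated convergence does not apply to nets, so your justification of WOT-continuity on the unit ball is incomplete as stated; the paper instead notes that for fixed $\xi,\eta$ the functions $u\mapsto\langle uau^*\xi,\eta\rangle$ are equicontinuous over $\|a\|\leq 1$, so the integral is uniformly approximated by Riemann sums, each manifestly WOT-continuous in $a$.

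The uniqueness argument, however, has a real gap---and it is the same gap as in the paper's proof. Your claim that the integrand $F(uau^*)$ ``collapses to $F(a)$'' tacitly uses $F(uau^*)=uF(a)u^*$, but conditional-expectation bimodularity holds over the \emph{range} $G'$, not over $G$. (The paper makes the identical move, writing $E(u^*au)=u^*E(a)u$ and citing ``$E$ is a conditional expectation''.) This step is valid precisely when $G\subseteq G'$, i.e., when $G$ is abelian, and in fact uniqueness can fail otherwise: for $\mathcal H=\mathbb C^2\otimes\mathbb C^2$ and $G=1\otimes SU(2)$ one has $G'=M_2\otimes\mathbb C$, and every state $\phi$ on $M_2$ yields a distinct normal conditional expectation $x\otimes y\mapsto \phi(y)\,x\otimes 1$ onto $G'$. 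The paper only invokes the lemma (via Corollary~\ref{cor:AtomicCE}) with $G$ abelian, so the downstream results are unaffected; but as written, neither your argument nor the paper's establishes uniqueness for general compact $G$.
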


\begin{proof}
Let $\mu_G$ be the normalized Haar measure on $G$ (under the strong$^*$ topology).
Fix $a \in \mathcal B(H)$, and consider the map $G \to \mathcal B(\mathcal H)$ defined by $u \mapsto u^*au$.
Then, with the strong$^*$ topology on the domain $G$ and the weak operator topology on the range $\mathcal B(\mathcal H)$, this map is continuous.
We may therefore integrate, defining
\[ E_G(a) := \scalebox{0.7}{WOT-}\hspace*{-1mm}\int_G u^*au\,d\mu_G(u). \]
(Here, $\scalebox{0.7}{WOT-}\hspace*{-1mm}\int_G \cdot\,d\mu_G$ indicates the Pettis integral, i.e., $E_G(a)$ is the unique operator satisfying
\[ \langle E_G(a)\xi, \eta\rangle = \int_G \langle u^*au\xi,\eta\rangle, \]
for all $\xi,\eta \in \mathcal H$.)
Using invariance of the Haar measure $\mu_G$, one easily sees that $E_G(a)$ commutes with all of $G$.

We now check \eqref{eq:CompactCE}; for this, set $\gamma:=\sup_{u \in G} \|[a,u]\|$.
For $\eta,\xi \in \mathcal H$,
\begin{align*}
\left|\left\langle \left(E_G(a)-a\right)\eta,\xi\right\rangle\right|
&= \left|\int_G \left\langle \left(u^*au-a\right)\eta,\xi\right\rangle\,d\mu_G(u)\right| \\
&\leq \int_G \left| \left\langle \left(u^*au-a\right)\eta,\xi\right\rangle\right|\,d\mu_G(u) \\
&\leq \int_G \left\|u^*au-a\right\|\,\left\|\eta\right\|\,\left\|\xi\right\|\,d\mu_G(u) \\
&= \int_G \left\|[u,a]\right\|\,\left\|\eta\right\|\,\left\|\xi\right\|\,d\mu_G(u) \\
&\leq \gamma \left\|\eta\right\|\,\left\|\xi\right\|.
\end{align*}
Therefore, \eqref{eq:CompactCE} follows.

In particular, we conclude that if $a \in G'$ then $E_G(a)=a$.
It is also straightforward to see that the function $E_G$ is c.p.c., and therefore it is a conditional expectation.

On the unit ball of $\mathcal B(\mathcal H)$, the integral defining $E_G$ can be uniformly approximated in the weak operator topology by (finite) Riemann sums, which themselves are continuous in the weak operator topology.
It follows that the restriction of $E_G$ to the unit ball is continuous using the weak operator topology.

If $E:\mathcal B(\mathcal H) \to G'$ is another conditional expectation whose restriction to the unit ball is weak operator topology continuous, then for a contraction $a \in \mathcal B(\mathcal H)$,
\begin{align*}
E_G(a)
&= E\left(E_G(a)\right) \quad &&\text{($E$ fixes $G'$)} \\
&= E\Big(\scalebox{0.7}{WOT}\text{-}\hspace*{-1mm}\int_G u^*au\,d\mu_G(u)\Big) && \\
&= \scalebox{0.7}{WOT}\text{-}\hspace*{-1mm}\int_G E(u^*au)\,d\mu_G(u) \ \ \  &&\text{(WOT-continuity of $E|_{\mathcal B(\mathcal H)_1}$)}\\
&= \scalebox{0.7}{WOT}\text{-}\hspace*{-1mm}\int_G u^*E(a)u\,d\mu_G(u) \ \ \ &&\text{($E$ is a conditional expectation)} \\
&= E_G(E(a)) && \\
&= E(a) \quad &&\text{($E_G$ fixes $G'$)}.
\end{align*}
Thus, $E=E_G$.
\end{proof}
\bigskip

Recall that an \emph{atomic} abelian von Neumann algebra is a von Neumann algebra isomorphic to $l^\infty(X)$, for some set $X$.
In the following, when $\mathcal H=l^2(X)$, then the conditional expectation $\mathcal B(l^2(X)) \to l^\infty(X)$ consists simply of taking an operator to its diagonal.

\begin{cor}
\label{cor:AtomicCE}
Let $D \subset \mathcal B(\mathcal H)$ be an atomic abelian von Neumann algebra.
Then there is a unique conditional expectation $E_D:\mathcal B(\mathcal H) \to D'$ whose restriction to the unit ball is weak operator topology continuous.
It satisfies
\begin{equation}
\label{eq:AtomicCE}
\left \|E_D(a)-a\right\| \leq \sup_{x \in D, \|x\| \leq 1} \left\|[a,x]\right\|, \quad a \in \mathcal B(\mathcal H).
\end{equation}
\end{cor}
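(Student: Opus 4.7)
My plan is to deduce Corollary \ref{cor:AtomicCE} from Lemma \ref{lem:CompactCE} by taking $G$ to be the unitary group of $D$ equipped with the strong$^*$ topology. Since $D$ is atomic abelian, it is generated as a von Neumann algebra by a family $(p_x)_{x \in X}$ of pairwise orthogonal minimal projections summing strongly to $1$, and the unitary group $G$ consists precisely of the operators $\sum_{x \in X} \lambda_x p_x$ with $\lambda_x \in \mathbb{T}$. This puts $G$ in bijection with $\mathbb{T}^X$.

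The main technical step is to verify that $G$ is compact in the strong$^*$ topology, by showing that the bijection with $\mathbb{T}^X$ (product topology) is a homeomorphism. Given a net $u_\alpha = \sum_x \lambda_x^{(\alpha)} p_x$ and $u = \sum_x \lambda_x p_x$ in $G$, and $\xi \in \mathcal H$ with decomposition $\xi = \sum_x p_x \xi$, a standard approximation by a finite subsum (exploiting $\sum_x \|p_x \xi\|^2 = \|\xi\|^2$) reduces the convergence $\|(u_\alpha - u)\xi\| \to 0$ to finitely many pointwise convergences $\lambda_x^{(\alpha)} \to \lambda_x$. Convergence of the adjoints $u_\alpha^* = \sum_x \overline{\lambda_x^{(\alpha)}} p_x$ is automatic by the same argument, so the strong$^*$ topology on $G$ coincides with the product topology, and $G$ is compact by Tychonoff.

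With compactness in hand, Lemma \ref{lem:CompactCE} produces a conditional expectation $E_D := E_G : \mathcal B(\mathcal H) \to G'$ satisfying the norm estimate $\|E_G(a) - a\| \leq \sup_{u \in G} \|[a, u]\|$ and uniquely characterized by being WOT-continuous on the unit ball. To identify the range, I would note that every self-adjoint contraction in $D$ is a norm limit of real linear combinations of the $p_x$, and each projection $p_x$ is a convex combination $\tfrac12(1 + (2p_x - 1))$ of unitaries in $G$; hence $G$ linearly spans a norm-dense subspace of $D$, giving $G' = D'$. The estimate \eqref{eq:AtomicCE} follows from the inclusion $G \subseteq D_1$, which makes $\sup_{u \in G}\|[a,u]\| \leq \sup_{x \in D_1}\|[a,x]\|$. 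Uniqueness of $E_D$ transfers directly from the uniqueness clause in Lemma \ref{lem:CompactCE}, since any WOT-on-unit-ball continuous conditional expectation onto $D' = G'$ qualifies as one onto $G'$ and must therefore coincide with $E_G$.

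I do not anticipate a substantive obstacle; the only step requiring care is the identification of the strong$^*$ topology on $G$ with the product topology on $\mathbb{T}^X$, and this is a routine $\e/2$-argument using the square-summability of $(\|p_x \xi\|)_{x \in X}$.
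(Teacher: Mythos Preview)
Your proof is correct and follows essentially the same strategy as the paper: reduce to Lemma \ref{lem:CompactCE} by exhibiting a strong$^*$-compact group $G$ of unitaries in $D$ that generates $D$ as a von Neumann algebra, so that $G'=D'$ and the estimate \eqref{eq:CompactCE} immediately yields \eqref{eq:AtomicCE}. The only difference is the choice of $G$: the paper uses the sign group $\{\sum_j (-1)^{\alpha_j}p_j:\alpha\in(\mathbb Z/2)^J\}\cong(\mathbb Z/2)^J$, whereas you take the full unitary group $\cong\mathbb T^X$; both are compact in the product topology and either choice works equally well (the paper, like you, tacitly arranges that the minimal projections sum to $1_{\mathcal B(\mathcal H)}$).
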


\begin{proof}
Without loss of generality, $D$ contains the identity operator.
$D$ is generated by a family of orthogonal projections $(p_j)_{j\in J}$, whose sum converges strongly to $1$.
Define 
\[ G:=\left\{ \sum_{j\in J} (-1)^{\alpha_j} p_j : (\alpha_j)_{j \in J} \in (\mathbb Z/2)^{J}\right\}. \]
This is a strong$^*$ compact subgroup of the unitary group of $D$ (it is homeomorphic to $(\mathbb Z/2)^{J}$ with the product topology), so that Lemma \ref{lem:CompactCE} applies to it.
It is clear that it generates $D$ as a von Neumann algebra, so that $G'=D'$.
The conclusion follows from Lemma \ref{lem:CompactCE}.
\end{proof}
\bigskip

\begin{cor}
\label{cor:CommuteCEEst}
Let $X$ be a proper metric space, let $\mathcal H$ an $X$-module, and let $a \in \mathcal B\left(\mathcal H\right)$.
Suppose $a \in \Leps(L,\e)$ for some $L,\e>0$ (in the notation of Remark \ref{rem:L-epsilon-notation}).
Let $(e_j)_{j \in J}$ be a family of positive contractions from $C_b(X)$ with $(2L^{-1})$-disjoint supports, and define $e:=\sum_{j\in J} e_j$.
Then, with $\theta_{(e_j)_{j\in J}}$ from Definition \ref{def:BlockCutdown}, 
\[ \|eae-\theta_{(e_j)_{j\in J}}(a)\| \leq \e. \]
\end{cor}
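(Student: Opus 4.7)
The plan is to deduce this from Corollary \ref{cor:AtomicCE} applied to a suitable atomic abelian von Neumann algebra $D\subseteq \mathcal B(\mathcal H)$ built from the family $(e_j)_{j\in J}$, together with a Lipschitz extension construction that bounds the resulting commutator supremum by commutators with $L$-Lipschitz contractions from $C_b(X)$.

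Let $p_j\in\mathcal B(\mathcal H)$ denote the support projection of $e_j$, formed by Borel functional calculus inside $\mathcal B(\mathcal H)$, and set $p_\infty := 1-\sum_{j\in J}p_j$. The disjointness of the supports of the $e_j$ ensures that $\{p_j\}_{j\in J}\cup\{p_\infty\}$ is a family of pairwise orthogonal projections summing to $1$, so they generate an atomic abelian von Neumann algebra $D\subseteq \mathcal B(\mathcal H)$. Since $p_je = e_j$ (as $p_je_k=\delta_{jk}e_k$) and $p_\infty e = 0$, a direct computation yields
\[ E_D(eae) = \sum_{j\in J}p_j(eae)p_j + p_\infty(eae)p_\infty = \sum_{j\in J}e_jae_j = \theta_{(e_j)_{j\in J}}(a), \]
and Corollary \ref{cor:AtomicCE} then gives $\|eae - \theta_{(e_j)_{j\in J}}(a)\| \leq \sup_{x\in D_1}\|[eae,x]\|$.

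For any $x=\sum_{j\in J}c_jp_j + c_\infty p_\infty \in D_1$, I will produce an $L$-Lipschitz contraction $f_c\in C_b(X)$ satisfying $f_c\equiv c_j$ on $\supp(e_j)$ for each $j\in J$. Set $g_j(y):=\max\{0,1-L\,d(y,\supp(e_j))\}$: this is $L$-Lipschitz with values in $[0,1]$, equals $1$ on $\supp(e_j)$, and is supported in $N_{L^{-1}}(\supp(e_j))$. Define $f_c := \sum_{j\in J}c_jg_j$. The $(2L^{-1})$-disjointness of the supports forces the neighborhoods $N_{L^{-1}}(\supp(e_j))$ to be pairwise disjoint, so $f_c$ is pointwise well-defined with $\|f_c\|\leq 1$. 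A case analysis verifies that $f_c$ is $L$-Lipschitz; the only delicate case is $y\in N_{L^{-1}}(\supp(e_j))$, $z\in N_{L^{-1}}(\supp(e_k))$ with $j\neq k$, where the triangle inequality together with $d(\supp(e_j),\supp(e_k))>2L^{-1}$ gives
\[ |f_c(y)-f_c(z)| \leq g_j(y)+g_k(z) = 2 - L(d(y,\supp(e_j))+d(z,\supp(e_k))) < L\,d(y,z). \]

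Finally, $f_c$ commutes with $e$ in the abelian algebra $C_b(X)$, so $[eae,f_c]=e[a,f_c]e$; and the calculations $xe = \sum_jc_je_j = f_ce$ and $ex = \sum_jc_je_j = ef_c$ (the $c_\infty$ contribution drops because $p_\infty e = 0$) yield the identity $[eae,x] = e[a,f_c]e$. Hence $\|[eae,x]\| \leq \|[a,f_c]\| < \e$ by the hypothesis $a\in\Leps(L,\e)$. Taking the supremum over $x\in D_1$ gives $\|eae - \theta_{(e_j)_{j\in J}}(a)\| \leq \e$, concluding the proof. The main technical point is the Lipschitz verification across distinct supports, which is precisely where the $(2L^{-1})$-disjointness hypothesis is used.
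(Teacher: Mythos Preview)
Your proposal is correct and follows essentially the same route as the paper's proof: both apply Corollary \ref{cor:AtomicCE} to the atomic abelian von Neumann algebra generated by the support projections of the $e_j$, and then bound the commutator $\|[eae,x]\|$ for $x\in D_1$ by replacing $x$ with an $L$-Lipschitz contraction in $C_b(X)$ that agrees with $x$ on the supports. Your explicit McShane-type construction of $f_c$ and the case analysis for its Lipschitz constant simply spell out what the paper asserts in one line (``there exists a function $\tilde f \in C_b(X)_1$ that is $L$-Lipschitz such that $f = 1_D\tilde f$''); the inclusion of $p_\infty$ is a harmless cosmetic difference, since $c_\infty$ never enters $f_c$ or the commutator identity $[eae,x]=e[a,f_c]e$.
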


\begin{proof}
Set $A_j$ equal to the support of $e_j$ for each $j \in J$.
We may find pairwise disjoint projections $p_j \in \mathcal B(\mathcal H)$, for $j \in J$, such that $p_j$ acts as a unit on $e_j$.
Define $D$ to be the von Neumann subalgebra generated by $\{p_j:j \in J\}$ (with unit $1_D=\sum_j p_j$), and let $E_D:\mathcal B(\mathcal H) \to D'$ be the unique conditional expectation provided by Corollary \ref{cor:AtomicCE}.
Then one finds that for $x \in \mathcal B(\mathcal H)$, $E_D(x) = \sum_{j\in J} p_jxp_j$ (converging in the strong operator topology), and therefore
\[ E_D(eae) = \theta_{(e_j)_{j \in J}}(a). \]

Using $(2L^{-1})$-disjointness of the family $\left(A_j\right)_{j\in J}$, for $f \in D_1$, there exists a function $\tilde f \in C_b\left(X\right)_1$ that is $L$-Lipschitz such that
\[ f = 1_D\tilde f. \]
Therefore,
\begin{align*}
f(eae)
&= \tilde feae \\
&= e\tilde f a e \\
&\approx_\e e a \tilde f e \\
&= (eae) f.
\end{align*}
Hence, by Corollary \ref{cor:AtomicCE}, 
\[ e ae \approx_\e E_D(e ae) = \theta_{(e_j)_{j\in J}}(a), \]
as required.
\end{proof}
\bigskip

\begin{defn}
Let $X$ be a proper metric space, $\mathcal H$ be an $X$-module, $a\in\mathcal B(\mathcal H)$ and let $\X$ be a metric family (of subsets of $X$). We say that $a$ is \emph{block diagonal with respect to $\X$}, if there exists an equicontinuous family $(e_j)_{j\in J}$ of positive contractions in $C_b(X)$ with pairwise disjoint supports, such that $a = \theta_{(e_j)_{j\in J}}(a)$, and the support of each $e_j$ is contained in some set $Y_j\in\X$. Furthermore, in this case we shall denote $a_{Y_j}:=e_jae_j$ and call these operators \emph{blocks} of $a$.
\end{defn}

The next lemma sets up the ``induction step'' to be applied in the context of the decomposition game, in the proof of Theorem \ref{thm:MainThm} (i) $\Rightarrow$ (iv).

\begin{lemma}
\label{lem:FDCInductionStepBasic}
Let $X$ be a proper metric space, and let $\Y$ be a metric family such that $\{X\} \xrightarrow{\,4L^{-1}+4\,}\Y$ for some $L>0$. Let $\mathcal H$ be an $X$-module, and let $a \in \mathcal B\left(\mathcal H\right)$.
Let $\e>0$ be such that $a\in\Leps(L,\e)$. Then we can write
\[
a \approx_{8\e} a_{00}+a_{01}+a_{10}+a_{11},
\]
where each $a_{ii'}$ is of the form $\theta_{(f_k)_{k\in K}}(gag')$ for some contractions $g,g' \in C_b(X)$ and some family $(f_k)_{k\in K}$ of $1$-Lipschitz positive contractions in $C_b(X)$ with disjoint supports, such that the support of each $f_k$ is contained in a set in $N_{L^{-1}+1}(\Y)$. 
\end{lemma}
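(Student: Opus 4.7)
My plan is to use the hypothesis $\{X\}\xrightarrow{4L^{-1}+4}\Y$ to split $X$ into two ``coarse'' pieces $X_0,X_1$, each of which is a well-separated disjoint union of sets $X_{ij}\in\Y$. On this skeleton I will build (i) bump functions $h_{ij}$ implementing the block structure, (ii) a partition of unity $g_0+g_1=1$ subordinate to $(X_0,X_1)$ that is \emph{compatible} with the $h_{ij}$ in the sense that $h_ig_i=g_i$, and then simply set $a_{ii'}:=\theta_{(h_{ij})_j}(g_iag_{i'})$. Observe that the real issue is to balance three competing constraints on $g_i$: it must form a partition of unity, its support must sit where $h_i\equiv 1$ (so there are no boundary errors), \emph{and} its Lipschitz constant must be controlled in terms of $L$ (so that $a\in\Leps(L,\e)$ provides a bound on the commutator $[g_i,a]$).

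\textbf{Construction.} Decompose $X=X_0\cup X_1$ with $X_i=\bigsqcup_{4L^{-1}+4\text{-disj.}}X_{ij}$, $X_{ij}\in\Y$. Define
\[ h_{ij}(x):=\max\{0,\min\{1,L^{-1}+1-d(x,X_{ij})\}\}, \]
a $1$-Lipschitz positive contraction with $h_{ij}\equiv 1$ on $N_{L^{-1}}(X_{ij})$ and $\supp h_{ij}\subseteq N_{L^{-1}+1}(X_{ij})\in N_{L^{-1}+1}(\Y)$. For each $i$ the supports of $(h_{ij})_{j}$ are then $2(L^{-1}+1)$-apart, hence $2L^{-1}$-disjoint as required by Corollary \ref{cor:CommuteCEEst}; set $h_i:=\sum_jh_{ij}$, which is a $1$-Lipschitz positive contraction with $h_i\equiv 1$ on $N_{L^{-1}}(X_i)$. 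Next set $\phi_i(x):=\max\{0,1-L\cdot d(x,X_i)\}$, which is $L$-Lipschitz, equals $1$ on $X_i$, and has $\supp\phi_i\subseteq N_{L^{-1}}(X_i)$. Since $X=X_0\cup X_1$ we have $\Psi:=\phi_0+\phi_1\geq 1$ everywhere, so $g_i:=\phi_i/\Psi$ is a positive contraction with $g_0+g_1=1$ and $\supp g_i\subseteq N_{L^{-1}}(X_i)\subseteq\{h_i=1\}$; in particular $h_ig_i=g_i$. A routine quotient-rule estimate using $\phi_i\leq\Psi$ and $\Psi\geq 1$ gives
\[ |g_i(x)-g_i(y)|\leq L\,d(x,y)\,\frac{2\phi_i(x)+\Psi(x)}{\Psi(x)\Psi(y)}\leq \frac{3L\,d(x,y)}{\Psi(y)}\leq 3L\,d(x,y), \]
so $g_i$ is $3L$-Lipschitz. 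Finally define $a_{ii'}:=\theta_{(h_{ij})_j}(g_iag_{i'})$; this is in the stated form with $(f_k)=(h_{ij})_j$, $g=g_i$, $g'=g_{i'}$.

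\textbf{Error estimate.} Since $g_i,g_{i'},h_{ij}\in C_b(X)$ commute, $a_{ii'}=g_i\theta_{(h_{ij})_j}(a)g_{i'}$, and using $g_0+g_1=1$,
\[ \sum_{i,i'}a_{ii'}=\sum_i g_i\theta_{(h_{ij})_j}(a). \]
Corollary \ref{cor:CommuteCEEst} applied to $a\in\Leps(L,\e)$ with the family $(h_{ij})_j$ gives $\|\theta_{(h_{ij})_j}(a)-h_iah_i\|\leq\e$ for each $i$, whence
\[ \Big\|\sum_{i,i'}a_{ii'}-\sum_i g_ih_iah_i\Big\|\leq 2\e. \]
Using $g_ih_i=g_i$ (so $ag_ih_i=ag_i$) we compute, for each $i$,
\[ g_iah_i-ag_i=g_iah_i-ag_ih_i=[g_i,a]h_i. \]
Since $g_i/3$ is an $L$-Lipschitz contraction and $a\in\Leps(L,\e)$, we have $\|[a,g_i/3]\|<\e$, so $\|[g_i,a]\|\leq 3\e$ and $\|g_iah_i-ag_i\|\leq 3\e$. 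Summing over $i$ and using $a=\sum_i ag_i$,
\[ \Big\|\sum_i g_iah_i-a\Big\|=\Big\|\sum_i(g_iah_i-ag_i)\Big\|\leq 6\e. \]
Combining gives $\|a-\sum_{i,i'}a_{ii'}\|\leq 2\e+6\e=8\e$.

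\textbf{Main obstacle.} The conceptually awkward step is to treat the ``off-diagonal'' pieces $a_{01},a_{10}$ on equal footing with the diagonal ones, since for $i\neq i'$ the operator $g_iag_{i'}$ is not naturally block-diagonal with respect to the $(h_{ij})_j$. This is what the identity $\theta_{(h_{ij})_j}(g_iag_{i'})=g_i\theta_{(h_{ij})_j}(a)g_{i'}$ is buying us: it lets the \emph{same} block cutdown of $a$ be ``sliced'' by the partition of unity on both sides, so that the Corollary is only ever applied to $a$ itself. The book-keeping on Lipschitz constants is tight (the total error is exactly $8\e=2\e+2\cdot 3\e$), and this forces the plateau width of $h_{ij}$ to equal $L^{-1}$ and the slope of $\phi_i$ to equal $L$; the separation $4L^{-1}+4=4(L^{-1}+1)$ in the hypothesis is exactly what allows all these constants to match up.
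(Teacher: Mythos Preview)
Your argument is correct and actually somewhat slicker than the paper's. The paper treats the diagonal and off-diagonal pieces asymmetrically: for $i=i'$ it applies Corollary~\ref{cor:CommuteCEEst} directly to a partition of unity $(e^{(i)}_j)_j$ supported in $N_1(X^{(i)}_j)$, while for $i\neq i'$ it introduces auxiliary $L$-Lipschitz functions $\hat e^{(i)}_j$ to swap $e^{(i)}\hat e^{(i)}a\hat e^{(i')}e^{(i')}$ with $e^{(i)}\hat e^{(i')}a\hat e^{(i)}e^{(i')}$ (at cost $2\e$), and then a further family $(f_{j,j'})$ supported on intersections $N_{L^{-1}+1}(X^{(i)}_j)\cap N_{L^{-1}+1}(X^{(i')}_{j'})$ on which Corollary~\ref{cor:CommuteCEEst} is applied again. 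Your two-layer construction (coarse partition $g_0+g_1=1$ sitting inside the plateaus of the block functions $h_{ij}$) handles all four pieces by the \emph{same} formula, exploiting that block cutdowns commute with $C_b(X)$-multiplication so that $\theta_{(h_{ij})_j}(g_iag_{i'})=g_i\theta_{(h_{ij})_j}(a)g_{i'}$; this reduces everything to two applications of Corollary~\ref{cor:CommuteCEEst} and two commutator estimates. One minor remark: your $3L$-Lipschitz bound on $g_i$ is loose --- using $\phi_0(x)\Psi(y)-\phi_0(y)\Psi(x)=\phi_0(x)\phi_1(y)-\phi_0(y)\phi_1(x)$ one sees $g_i$ is in fact $L$-Lipschitz, so your method really gives $4\e$ rather than $8\e$; your ``tightness'' comment at the end is therefore not quite accurate, though of course the weaker bound still proves the lemma as stated.
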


\begin{proof}
By the decomposition assumption, we can write
$$
X=X^{(0)}\cup X^{(1)},\quad X^{(i)}=\bigsqcup_{\substack{j\in J_i\\(4L^{-1}+4)\text{-disjoint}}}X_j^{(i)},\quad i=0,1,
$$
with $X_j^{(i)}\in\Y$ for each $i,j$.
We may find a partition of unity consisting of $1$-Lipschitz positive contractions $e^{(i)}_j \in C_b(X)$, for $i=0,1$ and $j \in J_i$, such that the support of $e^{(i)}_j$ is contained in $N_1(X^{(i)}_j)$.
It follows that for each $i$, the supports of $(e^{(i)}_j)_{j \in J_i}$ are $(4L^{-1}+2)$-disjoint.

For each $i=0,1$, define $e^{(i)}:=\sum_{j \in J_i} e^{(i)}_j$.
Since
$$
a = e^{(0)}ae^{(0)} + e^{(0)}ae^{(1)} + e^{(1)}ae^{(0)} + e^{(1)}ae^{(1)},
$$
it suffices to find $a_{ii'} \approx e^{(i)}ae^{(i')}$ for each $i,i'\in\{0,1\}$ with the required properties.
(We will be precise about the degree of approximation -- in short, it depends on whether $i$ and $i'$ are equal.)

For the case $i=i'$, Corollary \ref{cor:CommuteCEEst} shows that 
\[ e^{(i)}ae^{(i)} \approx_\e \theta_{(e^{(i)}_j)_{j\in J_i}}(e^{(i)}ae^{(i)}) =: a_{ii}. \]
 The latter operator is clearly block diagonal with respect to $N_1(\Y)$ (hence also with respect to $N_{L^{-1}+1}(\Y)$).

Turning now to the case $i\not=i'$,
note that for fixed $i$, the family $\left(N_{L^{-1}+1}(X^{\left(i\right)}_j)\right)_{j\in J_i}$ is $(2L^{-1}+2)$-disjoint.
For each $i,j$, there exists $\hat{e}^{\left(i\right)}_j \in C_b\left(X\right)$ that is $L$-Lipschitz, that acts as the identity on $e^{(i)}_j$, and is supported on $N_{L^{-1}+1}(X^{\left(i\right)}_j)$.
For each $i$, define $\hat{e}^{(i)}:=\sum_{j\in J_i} \hat{e}^{(i)}_j$.
We have
\begin{align*}
e^{(i)}ae^{(i')}
&= e^{(i)}\hat{e}^{(i)}a\hat{e}^{(i')}e^{(i')} \\
&\approx_{2\e} e^{(i)}\hat{e}^{(i')}a\hat{e}^{(i)}e^{(i')}.
\end{align*}
For each $j\in J_i$ and $j'\in J_{i'}$, there exists a $1$-Lipschitz positive contraction $f_{j,j'} \in C_b(X)$ that is $1$ on $N_{L^{-1}+1}(X^{\left(i\right)}_j) \cap N_{L^{-1}+1}(X^{\left(i'\right)}_{j'})$, and is supported on the metric neighbourhood of this set of radius $1$.
In particular, the support of each $f_{j,j'}$ is contained in a set in $N_{L^{-1}+2}(\Y)$, the family of supports of the family $(f_{j,j'})_{j,j'\in J_{i'}}$ is $(2L^{-1})$-disjoint, and $f:=\sum_{j,j'} f_{j,j'}$ acts as an identity on (both sides of) $e^{(i)}\hat{e}^{(i')}a\hat{e}^{(i)}e^{(i')}$.
Applying Corollary \ref{cor:CommuteCEEst}, we obtain
\begin{align*}
fe^{(i)}\hat{e}^{(i')}a\hat{e}^{(i)}e^{(i')}f
\approx_{\e} \theta_{(f_{j,j'})_{j\in J_i,\,j'\in J_{i'}}}\left(e^{(i)}\hat{e}^{(i')}a\hat{e}^{(i)}e^{(i')}\right).
\end{align*}
Thus,
\begin{align*}
e^{(i)}ae^{(i')}
&\approx_{2\e} e^{(i)}\hat{e}^{(i')}a\hat{e}^{(i)}e^{(i')} \\
&= fe^{(i)}\hat{e}^{(i')}a\hat{e}^{(i)}e^{(i')}f \\
&\approx_{\e} \theta_{(f_{j,j'})_{j\in J_i,\,j'\in J_{i'}}}\left(e^{(i)}\hat{e}^{(i')}a\hat{e}^{(i)}e^{(i')}\right) =: a_{ii'}.
\end{align*}
By construction, it is clear that $a_{ii'}$ is block diagonal with respect to $N_{L^{-1}+2}(\Y)$.

Summarizing, we have $a\approx_{\e+3\e+3\e+\e}a_{00}+a_{10}+a_{01}+a_{11}$, and all the $a_{ii'}$ are of the right form.
\end{proof}

We now strengthen the previous lemma by allowing an arbitrary metric family in place of $\{X\}$ and with $a$ a correspondingly block diagonal operator.

\begin{lemma}
\label{lem:FDCInductionStep}
Let $X$ be a proper metric space, and let $\X$ and $\Y$ be metric families, such that $\X\xrightarrow{\,4L^{-1}+4\,}\Y$ for some $L>0$. Let $\mathcal H$ be an $X$-module, and let $a \in \mathcal B\left(\mathcal H\right)$ be block diagonal with respect to $\X$. Let $\e>0$ be such that $a\in\Leps(L,\e)$. Then we can write
\begin{equation}
\label{eq:FDCInductionStepApprox}
a \approx_{8\e} a_{00}+a_{01}+a_{10}+a_{11},
\end{equation}
where each $a_{ii'}$ is of the form $\theta_{(f_k)_{k\in K}}(gag')$ for some contractions $g,g' \in C_b(X)$ and some equicontinuous family $(f_k)_{k\in K}$ of positive contractions in $C_b(X)$ with disjoint supports, such that the support of each $f_k$ is contained in a set in $N_{L^{-1}+1}(\Y)$. 
In particular:
\begin{enumerate}
\item each $a_{ii'}$ is block diagonal with respect to $N_{L^{-1}+1}(\Y)$,
\item if $a\in\Leps(L',\e')$ for some $L',\e'>0$, then each $a_{ii'}$ is in $\Leps(L',\e')$ as well, and
\item if $B \subseteq \mathcal B(\mathcal H)$ is a C*-subalgebra such that $C_b(X)BC_b(X)=B$ and $B$ is closed under block cutdowns, and if $a$ is in $B$, then each $a_{ii'}$ is in $B$ as well.
\end{enumerate}
\end{lemma}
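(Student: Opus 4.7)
The plan is to apply the strategy of Lemma \ref{lem:FDCInductionStepBasic} blockwise and assemble the results using the block diagonal structure of $a$. Since $a$ is block diagonal with respect to $\X$, fix an equicontinuous family $(e_j)_{j\in J}$ of positive contractions in $C_b(X)$ with pairwise disjoint supports and $\supp(e_j)\subseteq Y_j\in\X$ such that $a = \sum_j e_j a e_j$ strongly. Disjointness of the supports forces the crucial identity $e_j a e_{j'}=0$ for $j\neq j'$ (for $\xi\in\mathcal H$, $e_{j'}a\xi = \sum_j e_{j'}e_jae_j\xi = e_{j'}^2ae_{j'}\xi$). For each $j$, invoke $\X\xrightarrow{4L^{-1}+4}\Y$ to decompose $Y_j = Y_j^{(0)}\cup Y_j^{(1)}$ with each $Y_j^{(i)}=\bigsqcup_{(4L^{-1}+4)\text{-disj}}Y_{j,k}^{(i)}$ in $\Y$. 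As in the basic lemma, build $1$-Lipschitz partitions of unity $(e_{j,k}^{(i)})_{i,k}$ on $Y_j$ with $\supp(e_{j,k}^{(i)})\subseteq N_1(Y_{j,k}^{(i)})$ and $L$-Lipschitz fattened versions $\hat e_{j,k}^{(i)}$ (acting as identity on $e_{j,k}^{(i)}$, supported in $N_{L^{-1}+1}(Y_{j,k}^{(i)})$). Setting $e_j^{(i)}:=\sum_k e_{j,k}^{(i)}$, $\hat e_j^{(i)}:=\sum_k \hat e_{j,k}^{(i)}$, $h_j^{(i)}:=e_j e_j^{(i)}$, and $h_{j,k}^{(i)}:=e_j e_{j,k}^{(i)}$, one obtains $e_j a e_j = \sum_{i,i'}h_j^{(i)}a h_j^{(i')}$.

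For $i=i'$, Corollary \ref{cor:CommuteCEEst} applied to $a$ with the $(2L^{-1})$-disjoint family $(h_{j,k}^{(i)})_k$ gives $h_j^{(i)}a h_j^{(i)}\approx_\e \sum_k h_{j,k}^{(i)}a h_{j,k}^{(i)}$. The family $(h_{j,k}^{(i)})_{j,k}$ has globally pairwise disjoint supports (across $j$ via $(e_j)$, across $k$ within each $j$) and is equicontinuous, so $a_{ii}:=\theta_{(h_{j,k}^{(i)})_{j,k}}(a)$ is well-defined; moreover, the per-$j$ differences $h_j^{(i)}a h_j^{(i)} - \sum_k h_{j,k}^{(i)}a h_{j,k}^{(i)}$ live in the orthogonal corners $e_j\mathcal B(\mathcal H)e_j$, and summing over $j$ preserves the bound $\e$. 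For $i\neq i'$, mirror the intersection trick of the basic lemma: the two commutator estimates (using $a\in\Leps(L,\e)$) give $h_j^{(i)}a h_j^{(i')}\approx_{2\e} h_j^{(i)}\hat e_j^{(i')}a\hat e_j^{(i)}h_j^{(i')}$, and a further application of Corollary \ref{cor:CommuteCEEst} to a suitable family of $1$-Lipschitz positive contractions $f_{j,k,k'}$ (supported near the intersection sets and in particular inside sets of $N_{L^{-1}+1}(\Y)$, $(2L^{-1})$-disjoint within each $j$, and whose sum $f_j:=\sum_{k,k'}f_{j,k,k'}$ acts as the identity on the support of the preceding operator) yields the per-block approximation by $\theta_{(f_{j,k,k'})_{k,k'}}(h_j^{(i)}\hat e_j^{(i')}a\hat e_j^{(i)}h_j^{(i')})$ with additional error $\e$, totalling $3\e$ per block.

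To assemble the off-diagonal pieces into a single block cutdown, set $g:=\sum_j h_j^{(i)}\hat e_j^{(i')}$ and $g':=\sum_j \hat e_j^{(i)}h_j^{(i')}$, which are positive contractions in $C_b(X)$ by disjointness of the $\supp(e_j)$. Using $e_j a e_{j'}=0$, the cross terms in $gag'$ vanish, so $gag'=\sum_j h_j^{(i)}\hat e_j^{(i')}a\hat e_j^{(i)}h_j^{(i')}$. The key localization step is to arrange $\supp(f_{j,k,k'})\subseteq\supp(e_j)$, so that the global family $(f_{j,k,k'})_{j,k,k'}$ has pairwise disjoint supports, and so that in the expansion of $\theta_{(f_{j,k,k'})_{j,k,k'}}(gag')$, for each $(j,k,k')$ the factor $f_{j,k,k'}$ kills all summands $h_{j'}^{(i)}\hat e_{j'}^{(i')}a\hat e_{j'}^{(i)}h_{j'}^{(i')}$ with $j'\neq j$. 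Defining $a_{ii'}:=\theta_{(f_{j,k,k'})_{j,k,k'}}(gag')$ then gives the required form, and a further appeal to block orthogonality yields $\sum_j h_j^{(i)}a h_j^{(i')}\approx_{3\e} a_{ii'}$; summing the four pieces gives $a\approx_{8\e}a_{00}+a_{01}+a_{10}+a_{11}$.

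Properties (i)--(iii) are then routine: (i) each $a_{ii'}$ is a block cutdown by a family of positive contractions with supports inside sets of $N_{L^{-1}+1}(\Y)$; (ii) the estimate $\|[{-},f]\|<\e'$ for $L'$-Lipschitz contractions $f$ is preserved by sandwiching with $C_b(X)$-elements (via $[gag',f]=g[a,f]g'$) and by block cutdowns (via $[\theta(X),f]=\theta([X,f])$); and (iii) membership in $B$ follows from $C_b(X)BC_b(X)=B$ together with closure of $B$ under block cutdowns. The main technical obstacle is the localization step: arranging $\supp(f_{j,k,k'})\subseteq\supp(e_j)$ while retaining that $f_j$ acts as the identity on the relevant operator support. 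This is resolved by choosing the fattening radii in the construction of the $\hat e_{j,k}^{(i)}$ small relative to the ``interior'' of $\supp(e_j)$ (the operator supports lie in $\supp(e_j)\cap N_{L^{-1}+1}(Y_{j,k}^{(i)})\cap N_{L^{-1}+1}(Y_{j,k'}^{(i')})$, and since $e_j$ vanishes on $\partial\supp(e_j)$, one may build $f_{j,k,k'}$ peaking at $1$ on these operator supports and supported in the interior of $\supp(e_j)$).
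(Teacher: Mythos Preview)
Your overall strategy---apply Lemma~\ref{lem:FDCInductionStepBasic} to each block $e_jae_j$ and then sum---is exactly the paper's approach, and your treatment of the diagonal pieces $a_{ii}$, of the per-block error estimates, and of consequences (i)--(iii) is correct.

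The gap is precisely where you flag it: the ``localization step'' for the off-diagonal pieces. You need the global family $(f_{j,k,k'})_{j,k,k'}$ to have pairwise disjoint supports and to be equicontinuous, and for this you want $\supp(f_{j,k,k'})\subseteq\supp(e_j)$. But you \emph{also} need $f_j=\sum_{k,k'} f_{j,k,k'}$ to act as the identity on $h_j^{(i)}\hat e_j^{(i')}a\hat e_j^{(i)}h_j^{(i')}$, which forces $f_j\equiv 1$ on the cozero set of the left (and right) multiplier $e_j e_j^{(i)}\hat e_j^{(i')}$. That cozero set contains $\{e_j\neq 0\}\cap(\cdots)$, which in general accumulates on $\partial\supp(e_j)$; a continuous $f_j$ cannot be identically $1$ there and still vanish off $\supp(e_j)$. (Concretely: take $e_j(x)=\max(1-|x|,0)$ on $\mathbb R$; no continuous $f_j$ is $1$ on $(-1,1)$ and supported in $[-1,1]$.) So your proposed resolution---``build $f_{j,k,k'}$ peaking at $1$ on the operator supports and supported in the interior of $\supp(e_j)$''---cannot be carried out as stated, and forcing $1$-Lipschitz behaviour on top of this makes it worse.

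The paper sidesteps the problem algebraically rather than geometrically: instead of trying to confine the $f$'s to $\supp(e_j)$, it \emph{multiplies} them by a fractional power of $e_j$. Writing each per-block output as $a^j_{ii'}=\theta_{(f_{j,k})_k}(g_j\,a_{Y_j}\,g_j')$ with $a_{Y_j}=e_jae_j$, one redistributes the factor $e_j$ and observes that
\[
a_{ii'}=\sum_j a^j_{ii'}=\theta_{(e_j^{1/4}f_{j,k})_{j,k}}\Big(\big(\textstyle\sum_j e_j^{1/4}g_j\big)\,a\,\big(\textstyle\sum_j e_j^{1/4}g_j'\big)\Big).
\]
Here the cross-$j$ terms vanish because $e_j^{1/4}$ already has support in $\supp(e_j)$; the family $(e_j^{1/4}f_{j,k})_{j,k}$ is automatically equicontinuous (equicontinuity survives taking $t\mapsto t^{1/4}$ and products with $1$-Lipschitz functions) with pairwise disjoint supports; and the supports still lie in sets of $N_{L^{-1}+1}(\Y)$. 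No localization of the $f$'s is needed. If you rework your off-diagonal assembly using this device (or any equivalent redistribution of powers of $e_j$ among the four outer factors), the argument goes through.
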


\begin{proof}
Without loss of generality, both $\X$ and $\Y$ are closed under taking subsets.
Start by letting $(e_j)$ be an equicontinuous family of positive contractions in $C_b(X)$ with disjoint supports, such that $Y_j:=\mathrm{supp}(e_j) \in \X$.
Applying Lemma \ref{lem:FDCInductionStepBasic} to each $a_{Y_j}$ yields
\[ a_{Y_j} \approx_{8\e} a^j_{00}+a^j_{01}+a^j_{10}+a^j_{11}, \]
satisfying the conclusions of that lemma.
Set
\[ a_{ii'} := \sum_j a^j_{ii'}. \]
Then \eqref{eq:FDCInductionStepApprox} follows from \eqref{eq:BlockNorm}

To see that each $a_{ii'}$ has the right form, fix $i$ and $i'$.
For each $j$, there exist contractions $g_j,g_j' \in C_b(X)$ and a family $(f_{j,k})_{k\in K_j}$ of $1$-Lipschitz positive contractions in $C_b(Y_j)$ with disjoint supports, such that
\[ a^j_{ii'} = \theta_{(f_{j,k})_{k\in K_j}}(g_jag_j'). \]
and the support of each $f_{j,k}$ is of the form $Y \cap Z$ for some $Z \in N_{L^{-1}+1}(\Y)$.

Then observe that 
\[ a_{ii'} = \theta_{(e_j^{1/4}f_{j,k})_{j \in J, k \in K_j}}((\sum_j e_j^{1/4}g_j)a(\sum_j e_j^{1/4}g_j)), \]
where the family appearing in this block-cutdown formula, namely
\[ (e_j^{1/4}f_{j,k})_{j \in J, k \in K_j} \]
is equicontinuous and contained in $N_{L^{-1}+1}(\Y)$.

(i)-(ii) are immediate consequences of the form that $a_{ii'}$ takes.
Since multiplication by $C_b(X)$ preserves block structure, and using \eqref{eq:BlockNorm}, (iii) can also be seen to be a consequence of the form that $a_{ii'}$ takes.
\end{proof}

We have seen in the previous lemma that we will need to work with ``thickened'' metric families, so we record the following straightforward observation.

\begin{lemma}
\label{lem:ThickenedDecomposes}
Let $\X$ and $\Y$ be metric families, $R,S\geq 0$. Assume that $\X\xrightarrow{\,R\,}\Y$ and that $R-2S\geq 0$. Then $N_S(\X)\xrightarrow{\,R-2S\,}N_S(\Y)$.
\end{lemma}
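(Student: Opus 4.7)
The plan is to unfold the definitions and trace through one step of the decomposition, thickening both the ambient sets and the pieces by $S$.

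First I would fix an arbitrary element of $N_S(\X)$, which by definition of the thickened family has the form $N_S(Y')$ for some $Y'\in\X$. I then apply the hypothesis $\X\xrightarrow{R}\Y$ to $Y'$ to obtain a decomposition
\[
Y' = X_0\cup X_1,\qquad X_i = \bigsqcup_{R\text{-disjoint}} X_{ij},\qquad X_{ij}\in\Y.
\]
The natural candidate decomposition for $N_S(Y')$ is then $Z_0\cup Z_1$ with $Z_i:=N_S(X_i)$, and since metric neighbourhoods commute with arbitrary unions, $Z_i = \bigcup_j N_S(X_{ij})$. Each $N_S(X_{ij})$ lies in $N_S(\Y)$ by construction, and $N_S(X_0)\cup N_S(X_1) = N_S(X_0\cup X_1) = N_S(Y')$, so only the disjointness remains to check.

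For the disjointness claim, I would carry out the standard two-sided triangle inequality estimate: given $j\neq j'$ and points $x\in N_S(X_{ij})$, $x'\in N_S(X_{ij'})$, pick witnesses $y\in X_{ij}$ and $y'\in X_{ij'}$ with $d(x,y)\leq S$ and $d(x',y')\leq S$. Then
\[
d(x,x') \;\geq\; d(y,y') - d(y,x) - d(x',y') \;>\; R - 2S,
\]
where we have used the $R$-disjointness of the family $(X_{ij})_j$. Taking the infimum over such $x,x'$ gives $d(N_S(X_{ij}), N_S(X_{ij'})) \geq R-2S$; combined with the hypothesis $R-2S\geq 0$ (so that $R-2S$-disjointness makes sense and the strict inequality is preserved in the limit, using that the infimum of values strictly larger than $R-2S$ is at least $R-2S$, which is precisely what $(R-2S)$-disjointness requires in the paper's convention of strict inequality --- here one can if necessary pass to an $\varepsilon$-argument or simply observe the strict inequality $d(y,y')>R$ yields $d(x,x')>R-2S$ directly).

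There is essentially no obstacle to this argument; it is a routine unravelling. The only mild subtlety is bookkeeping around strict versus non-strict inequality in the definition of $(R-2S)$-disjointness, but since $d(y,y')>R$ gives the \emph{strict} bound $d(x,x')>R-2S$ for every pair of witnesses, the conclusion follows directly without any limiting argument.
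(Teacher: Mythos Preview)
Your argument is correct and is exactly the routine unfolding the paper intends; the paper itself gives no proof, recording the lemma as a ``straightforward observation.'' One small correction to your closing remark on strictness: the pointwise strict inequality $d(x,x')>R-2S$ does not by itself force the infimum to be strict, but you in fact have the uniform bound $d(x,x')\geq d(X_{ij},X_{ij'})-2S$, and since $d(X_{ij},X_{ij'})>R$ is a single fixed number, this already gives $d\big(N_S(X_{ij}),N_S(X_{ij'})\big)\geq d(X_{ij},X_{ij'})-2S>R-2S$.
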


\begin{proof}[Proof of Theorem \ref{thm:MainThm} (i) $\Rightarrow$ (iv)]
Recall that we are given an operator $b$ on an $X$-module $\mathcal H$ satisfying $[b,f]=0$ for all $f\in \SV_\oomega(X)$.
Given $\e>0$, our task is to produce a finite propogation operator in $B$ which is $\e$-far from $b$. Lemma \ref{lem:CommutantChar1} provides us with $L_n$ for every 
\[ \e_n:=\e/(2\cdot8^n), \]
such that $b\in\Leps(L_n,\e_n)$. Let 
\[ R_n:=4(L_n^{-1}+1)+2(L_{n-1}^{-1}+1)+\dots+2(L_1^{-1}+1). \]
As $X$ has straight finite decomposition complexity (see Definition \ref{def:sfdc}), we obtain metric families $\{X\}=\X_0,\X_1,\dots,\X_m$, such that $\X_{n-1}\xrightarrow{\,R_n\,}\X_{n}$ for $n\in\{1,\dots,m\}$ and $\X_m$ is uniformly bounded.
Note that Lemma \ref{lem:ThickenedDecomposes} gives us that
$$
N_{(L_{n-1}^{-1}+1)+\dots+(L_1^{-1}+1)}(\X_{n-1})
\xrightarrow{\,4(L_n^{-1}+1)\,}
N_{(L_{n}^{-1}+1)+\dots+(L_1^{-1}+1)}(\X_{n}).
$$
Thus, we can inductively apply Lemma \ref{lem:FDCInductionStep}, with $L_n$, $\e_n$, the operators obtained in the previous iteration, and metric families from the above display. After $m$ steps, we will have approximated the operator $b$ by an operator $b'$ which is a sum of finitely many ($4^m$ to be precise) operators in $B$ which are block diagonal with respect to 
\[ N_{(L_m^{-1}+1)+\dots+(L_1^{-1}+1)}(\X_m). \]
Since $\X_m$ is uniformly bounded, so is the above family, and therefore operators which are block diagonal with respect to it have finite propogation; consequently, $b' \in \mathrm{Roe}(B,X)$.
Tracing through the estimates given by Lemma \ref{lem:FDCInductionStep}, we compute that the distance from $b$ to $b'$ is at most 
\begin{align*}
	8\e_1 + 4\left(8\e_2 + 4\left(8\e_3 + 4\left(\dots\right) \right) \right)
	= \e\left(\tfrac12+\tfrac14+\tfrac18+\dots\right) = \e.
\end{align*}
This finishes the proof.
\end{proof}

\begin{remark}
When the asymptotic dimension of $X$ is at most $d<\infty$, the induction component of the above proof can be removed: one can use the idea of Lemma \ref{lem:FDCInductionStepBasic} with a decomposition of $X$ into $d+1$ (instead of 2) uniformly bounded, $(4L^{-1}+4)$-disjoint families, and correspondingly approximate $a$ be a sum of $(d+1)^2$ block diagonal operators.
\end{remark}

\bigskip

\section{Higson functions}
\label{sec:Higson}

To prepare to prove (i) $\Leftrightarrow$ (iii) of Theorem \ref{thm:MainThm}, we begin by considering a special class of Higson functions which are more closely related to our definition of $\SV_\oomega(X)$.

\begin{defn}
Let $X$ be a proper metric space.
A function $g \in C_b(X)$ is a \emph{Lipschitz--Higson function} if, for every $L>0$, there exists a compact set $A \subseteq X$ such that $g|_{X \setminus A}$ is $L$-Lipschitz.
The set of all Lipschitz--Higson functions on $X$ is denoted $C_{lh}(X)$.
\end{defn}
\bigskip

Fix a proper metric space $X$ and a point $x_0 \in X$.
For $R>0$, define $e_R \in C_0(X)$ by
\begin{equation}
\label{eq:e_Rdef}
e_R(x) := \max\{0, 1-d(x,B_R(x_0))/R\}.
\end{equation}
Observe that $e_R$ is $R^{-1}$-Lipschitz, is $1$ on $\bar{B}_R(x_0)$, and vanishes outside of $B_{2R}(x_0)$.
\bigskip

\begin{lemma}
\label{lem:LipHig}
$C_h(X) = C_{lh}(X) + C_0(X)$.
\end{lemma}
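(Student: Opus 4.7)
The inclusion $C_{lh}(X) + C_0(X) \subseteq C_h(X)$ is straightforward: given $g \in C_{lh}(X)$, for any $R, \varepsilon>0$ pick a compact set $A$ making $g|_{X \setminus A}$ be $(\varepsilon/R)$-Lipschitz, so pairs of points outside $A$ at distance $<R$ satisfy $|g(x)-g(y)|<\varepsilon$; and any $g \in C_0(X)$ satisfies the Higson condition because $|g|\to 0$ at infinity. For the reverse inclusion, given $g \in C_h(X)$, the plan is to write $g = g_{lh} + (g - g_{lh})$ with $g_{lh}\in C_{lh}(X)$, constructed by patching together infimal convolutions of decreasing Lipschitz constant.

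The building block is $h_L(x) := \inf_{y\in X}(g(y) + L\,d(x,y))$, which is $L$-Lipschitz, dominated by $g$, and bounded by $\|g\|_\infty$. First I would establish that $g - h_L \in C_0(X)$ for every $L>0$: rewrite $g(x)-h_L(x)=\sup_{y}(g(x)-g(y)-L\,d(x,y))$ and apply the Higson condition with scale $R=2\|g\|_\infty/L$; for $y$ within distance $R$ of $x$ and outside the Higson compact, the Higson bound gives $g(x)-g(y)<\varepsilon$; for $y$ at distance $\geq R$ from $x$, the penalty $L\,d(x,y)\geq 2\|g\|_\infty$ dominates; and for $y$ in the Higson compact, the penalty again dominates once $x$ is sufficiently far from that compact.

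For the patching, set $L_n := 1/n$ and pick radii $R_1 < R_2 < \cdots \to \infty$ with $R_{n+1} - R_n \geq n$, and with $|g - h_{L_k}| \leq 1/n$ on $\{d(\cdot, x_0) \geq R_n\}$ for both $k=n$ and $k=n+1$. On the annulus $\{R_n \leq d(\cdot, x_0) \leq R_{n+1}\}$, set $t(x) := (d(x,x_0) - R_n)/(R_{n+1} - R_n)$ and define
\[ g_{lh}(x) := (1 - t(x))\, h_{L_n}(x) + t(x)\, h_{L_{n+1}}(x). \]
Continuity across annular boundaries is automatic (both formulas return $h_{L_{n+1}}(x)$ at $d(x,x_0) = R_{n+1}$), and on the $n$-th annulus $g - g_{lh}$ is a convex combination of $g - h_{L_n}$ and $g - h_{L_{n+1}}$, both bounded by $1/n$, so $g - g_{lh} \in C_0(X)$.

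The main obstacle is verifying $g_{lh} \in C_{lh}(X)$: given $L > 0$, one must exhibit a compact set outside of which $g_{lh}$ is $L$-Lipschitz. Fixing $N$ large depending on $L$ and $\|g\|_\infty$, I would bound $|g_{lh}(x) - g_{lh}(y)|/d(x,y)$ for $x, y$ with $d(\cdot, x_0) \geq R_N$, considering: (i) $x, y$ in the same annulus $n \geq N$, handled by the product rule using $\mathrm{Lip}(t) \leq 1/n$, $\mathrm{Lip}(h_{L_k}) \leq L_k \leq 1/n$, and $|h_{L_n} - h_{L_{n+1}}| \leq 2/n$ on the annulus; (ii) adjacent annuli $n, n+1$, the delicate case, handled via the rewrite
\[ g_{lh}(x) - g_{lh}(y) = \bigl(h_{L_{n+1}}(x) - h_{L_{n+1}}(y)\bigr) + (1-t(x))\bigl(h_{L_n} - h_{L_{n+1}}\bigr)(x) - t(y)\bigl(h_{L_{n+2}} - h_{L_{n+1}}\bigr)(y) \]
combined with the bounds $1 - t(x) \leq d(x,y)/(R_{n+1} - R_n)$ and $t(y) \leq d(x,y)/(R_{n+2} - R_{n+1})$ (which hold because $x, y$ lie on opposite sides of $d(\cdot, x_0) = R_{n+1}$, so each of $R_{n+1} - d(x, x_0)$ and $d(y, x_0) - R_{n+1}$ is at most $d(x,y)$), which absorb the only-trivial bound $|h_{L_k} - h_{L_{k+1}}|(\cdot) \leq 2\|g\|_\infty$; and (iii) non-adjacent annuli, where $d(x,y) \geq R_{N+2} - R_{N+1} \geq N$ forces $|g_{lh}(x) - g_{lh}(y)|/d(x,y) \leq 2\|g\|_\infty / N$. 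The growth condition $R_{n+1} - R_n \geq n$ is exactly what makes case (ii) work.
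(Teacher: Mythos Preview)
Your argument is correct and takes a genuinely different route from the paper's proof. The paper builds a Lipschitz--Higson approximant by first cutting $g$ into annular pieces $g_n=(e_{R_n}-e_{R_{n-1}})g$ and then, for each piece separately, hand-constructing an $n^{-1}$-Lipschitz function $f_n$ from distance functions to the level sets $g_n^{-1}[i/n,1]$; the approximant is $f=\sum_n f_n$. You instead use infimal convolution $h_L(x)=\inf_y(g(y)+L\,d(x,y))$ as a global $L$-Lipschitz regularisation, establish once and for all that $g-h_L\in C_0(X)$, and then blend the $h_{1/n}$ across annuli by a linear partition of unity in the radial variable. Your approach is more conceptual (the infimal convolution is a standard Lipschitz-approximation device and the $C_0$ property drops out cleanly), and the Lipschitz verification in your case~(ii), though requiring the slightly delicate observation that $1-t(x)$ and $t(y)$ are controlled by $d(x,y)$, avoids the paper's somewhat ad hoc level-set construction. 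The paper's approach has the minor advantage of being entirely explicit and self-contained. Two small remarks: you should note (as the paper does) that one may assume $g$ is real-valued by splitting into real and imaginary parts, since $h_L$ only makes sense for real $g$; and you should say what $g_{lh}$ is on the ball $\{d(\cdot,x_0)<R_1\}$ (e.g., $h_{L_1}$), though this is immaterial for the conclusion.
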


\begin{proof}
The inclusion $\supseteq$ is straightforward.
To go the other direction, let $g \in C_h(X)$.
We shall produce $f \in C_{lh}(X)$ such that $f-g \in C_0(X)$.
Without loss of generality, $g$ is a positive contraction.
Fix a point $x_0 \in X$.

Recursively define $R_0:=0$ and $R_n \geq \max\{2(n+1),2R_{n-1}\}$ such that if $x,y \in X \setminus B_{R_n}(x_0)$ and $d(x,y) < n+1$ then $|g(x)-g(y)| < \frac1{2(n+1)}$.
Using $e_R$ from \eqref{eq:e_Rdef}, set 
\[ g_1:= e_{R_1}g, \quad g_n := \left(e_{R_n}-e_{R_{n-1}}\right)g, \quad n\geq 2. \]
Note that, for $n \geq 2$, $g_n$ is such that if $d(x,y) < n$ then $|g_n(x)-g_n(y)|<\frac1n$, and that
\begin{equation}
\label{eq:gSum} g = \sum_{n=1}^\infty g_n,
\end{equation}
converging pointwise (as at each point, at most two terms of the sum are nonzero).

Define $f_1:=g_1$.

Fix $n\geq 2$; we shall define a function $f_n$ which approximates $g_n$, but is more Lipschitz.
Define
\[ A_i := g_n^{-1}\left[\tfrac i{n},1\right], \quad i=1,\dots,n, \]
and define $c_i\in C_b(X)$ by
\[ c_i(x):= \max\left\{1-\tfrac{d(x,A_i)}n,0\right\}, \quad i=1,\dots,n, \]
which is $(1/n)$-Lipschitz.
Set
\[ f_n:= \frac1n \sum_{i=1}^n c_i, \]
which is also $(1/n)$-Lipschitz, as it is an average of such.
Moreover, $\|f_n-g_n\|\leq \frac1n$, and the support of $f_n$ is contained in the support of $g_n$.

Set $f:=\sum_{n=1}^\infty f_n$; as in \eqref{eq:gSum}, at each point, at most two terms of the sum are nonzero.
Using this fact, one sees that $\sum_{n=n_0}^\infty f_n$ is $2n_0^{-1}$-Lipschitz, for all $n_0 \geq 2$.
Moreover, $f$ agrees with this tail sum outside of $B_{2R_{n_0-1}}$, which proves that $f \in C_{lh}(X)$.

Similarly, since $f-g$ agrees with the tail $\sum_{n=n_0}^\infty (f_n-g_n)$ outside of $B_{2R_{n_0-1}}$, and this tail has norm at most $\frac2n$, it follows that $f-g \in C_0(X)$.
\end{proof}
\bigskip

\begin{remark}
We note in passing that, due to the previous lemma, the Higson corona $\nu X$ (defined as the compact Hausdorff space satisfying $C(\nu X) \cong C_h(X)/C_0(X)$) satisfies
\[ C(\nu X) \cong C_{lh}(X)/C_0(X). \]
\end{remark}
\bigskip

Now we set out two constructions to be used, producing a Lipschitz--Higson function from a very Lipschitz sequence and vice versa.
Neither construction is canonical: both depend on a number of choices.
\bigskip

Let $(f_k)_{k=1}^\infty \in \SV(X)$, let $(f_{k_i})_{i=1}^\infty$ be a subsequence, and let $(R_i)_{i=0}^\infty \subset (0,\infty)$ be a sequence such that $R_{i+1} \geq 6R_i$ for each $i$.
From these, and using $e_R$ from \eqref{eq:e_Rdef}, define
\begin{equation}
\label{eq:HigsongDef}
 g_{x_0,(f_{k_i})_{i=1}^\infty,(R_i)_{i=0}^\infty} := \sum_{i=1}^\infty f_{k_i}(e_{R_i}-e_{3R_{i-1}}) \in C_b(X).
\end{equation}
Note that the supports of the functions in the summation are pairwise disjoint, so we can treat the series as converging pointwise.
It is straightforward to see that $\|g_{x_0,(f_{k_i})_{i=1}^\infty,(R_i)_{i=0}^\infty}\| \leq \|(f_k)_{k=1}^\infty\|$.

\begin{lemma}
\label{lem:Higsong}
Fix a proper metric space $X$ and a point $x_0 \in X$.
For $(f_k)_{k=1}^\infty \in \SV(X)$, a subsequence $(f_{k_i})_{i=1}^\infty$, and a sequence $(R_i)_{i=0}^\infty \subset (0,\infty)$ such that $R_{i+1} \geq 6R_i$ for each $i$, let $g_{x_0,(f_{k_i})_{i=1}^\infty,(R_i)_{i=0}^\infty}$ be as defined in \eqref{eq:HigsongDef}.
Then
\[ g_{x_0,(f_{k_i})_{i=1}^\infty,(R_i)_{i=0}^\infty} \in C_{lh}(X). \]
\end{lemma}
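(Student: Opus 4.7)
The plan is to establish the stronger claim that, for each $L>0$, the function $g := g_{x_0,(f_{k_i})_{i=1}^\infty,(R_i)_{i=0}^\infty}$ is $L$-Lipschitz outside a suitable compact ball centred at $x_0$. Writing $\psi_i := f_{k_i}(e_{R_i}-e_{3R_{i-1}})$, the two inputs will be (a) an individual Lipschitz bound for $\psi_i$ that tends to $0$ as $i\to\infty$, and (b) the pairwise disjointness of the supports of the $\psi_i$, which is noted just before the statement.

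First I would bound $\mathrm{Lip}(\psi_i)$. Since $e_R$ is $R^{-1}$-Lipschitz, the difference $e_{R_i}-e_{3R_{i-1}}$ is $\bigl(R_i^{-1}+(3R_{i-1})^{-1}\bigr)$-Lipschitz and bounded by $1$ in norm. Letting $M := \sup_k \|f_k\|_\infty$ (finite because $(f_k)\in\SV(X)$ is a bounded sequence) and $L_i := \mathrm{Lip}(f_{k_i})$, the footnote product formula yields
\[
\mathrm{Lip}(\psi_i)\;\leq\; M\bigl(R_i^{-1}+(3R_{i-1})^{-1}\bigr)\;+\;L_i.
\]
The hypothesis $R_{i+1}\geq 6R_i$ forces $R_i\to\infty$, and very Lipschitzness of $(f_k)$ forces $L_i\to 0$; hence $\mathrm{Lip}(\psi_i)\to 0$.

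Next I would record the support structure. Because $6R_{i-1}\leq R_i$, on $\bar B_{3R_{i-1}}(x_0)$ both $e_{R_i}$ and $e_{3R_{i-1}}$ equal $1$, while $e_{R_i}$ vanishes outside $B_{2R_i}(x_0)$; so $\mathrm{supp}(\psi_i)\subseteq B_{2R_i}(x_0)\setminus\bar B_{3R_{i-1}}(x_0)$, and these annuli are pairwise disjoint. Now, given $L>0$, pick $i_0$ with $\mathrm{Lip}(\psi_i)\leq L/2$ for all $i>i_0$, and set $A:=\bar B_{2R_{i_0}}(x_0)$, which is compact by properness of $X$. For $i\leq i_0$ one has $\mathrm{supp}(\psi_i)\subseteq A$, so on $X\setminus A$ the function $g$ agrees with the tail $\sum_{i>i_0}\psi_i$.

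Finally, I would check the Lipschitz bound on $X\setminus A$. Given $x,y\in X\setminus A$, disjointness of supports implies there is at most one index $i_x>i_0$ with $\psi_{i_x}(x)\neq 0$, and likewise at most one $i_y$ for $y$; for any other index $i$, $\psi_i(x)=\psi_i(y)=0$. The main trick is that $\psi_{i_x}(y)=0=\psi_{i_y}(x)$ (by disjointness), so
\[
g(x)-g(y)\;=\;\bigl(\psi_{i_x}(x)-\psi_{i_x}(y)\bigr)\;+\;\bigl(\psi_{i_y}(x)-\psi_{i_y}(y)\bigr),
\]
and $|g(x)-g(y)|\leq (\mathrm{Lip}(\psi_{i_x})+\mathrm{Lip}(\psi_{i_y}))\,d(x,y)\leq L\,d(x,y)$, with trivial modification if no $i_x$ or $i_y$ exists. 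The only mild subtlety — the ``obstacle'' such as it is — is this bookkeeping when $x$ and $y$ live in different annuli; once the telescoping identity above is in hand, everything reduces to the individual Lipschitz bounds from Step 1.
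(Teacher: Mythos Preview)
Your proof is correct and follows essentially the same strategy as the paper's: bound $\mathrm{Lip}(\psi_i)$ uniformly for large $i$, and then exploit the pairwise disjointness of the supports to control $g$ on the complement of a large ball.

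The only genuine difference is in the final bookkeeping step. The paper argues via a dichotomy: for $x,y$ outside $B_{R_{i_0}}(x_0)$, either $d(x,y)\geq R_{i_0}$ (whence the trivial bound $|g(x)-g(y)|\leq 2\leq Ld(x,y)$ applies), or else $x$ and $y$ fall under a \emph{single} summand, $g(x)=h_i(x)$ and $g(y)=h_i(y)$ for the same $i$ (this uses the width-$R_i$ gap between consecutive annuli, which is where the factor $6$ in $R_{i+1}\geq 6R_i$ is actually used). You instead use the telescoping identity $g(x)-g(y)=(\psi_{i_x}(x)-\psi_{i_x}(y))+(\psi_{i_y}(x)-\psi_{i_y}(y))$, which handles all cases uniformly and only needs disjointness of the supports, not a quantitative gap. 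Your route is arguably cleaner and shows that the particular growth rate $6$ is not essential to this lemma (mere disjointness of the annuli would do), whereas the paper's argument is a touch more concrete about where the constants go. Either way the substance is the same.
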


\begin{proof}
Without loss of generality, we may assume $\|f_k\| \leq 1$ for all $k$.
For ease of notation, we set
\[ g:=g_{x_0,(f_{k_i})_{i=1}^\infty,(R_i)_{i=0}^\infty}. \]

As in the definition of a Lipschitz--Higson function, let $L>0$ be given.
Pick $i_0$ such that $R_{i_0-1} > 2/L$ and such that $f_i$ is $(L/2)$-Lipschitz for all $i \geq i_0$.
We will be done when we show that the restriction of $g$ to $X \setminus B_{R_{i_0}}(x_0)$ is $L$-Lipschitz.

For $i \geq i_0$, $e_{R_i}-e_{3R_{i-1}}$ is $(L/2)$-Lipschitz, so that the product
\[ h_i := f_{k_i}(e_{R_i}-e_{3R_{i-1}}) \]
is $L$-Lipschitz.
For $x,y \in X \setminus B_{R_{i_0}}(x_0)$, note that, by the definition of $e_R$ and the condition $R_{i+1} \geq 6R_i$, that at least one of the following conditions holds.
\begin{enumerate}
\item $d(x,y) \geq R_{i_0}$, or
\item There exists $i$ such that $g(x)=h_i(x)$ and $g(y)=h_i(y)$.
\end{enumerate}

In the first case,
\[ |g(x)-g(y)| \leq 2\|g\| = 2 \leq 3LR_{i_0-1} \leq 3LR_{i_0} \leq Ld(x,y). \]

In the second case, since $h_i$ is $L$-Lipschitz, it follows that $|g(x)-g(y)|\leq Ld(x,y)$.
\end{proof}
\bigskip

Next, let $g \in C_h(X)$ be given, along with a sequence $(R_k) \in (0,\infty)$ such that $\lim_{k\to\infty} R_k = \infty$.
From this data, define (using $e_R$ from \eqref{eq:e_Rdef})
\begin{equation}
\label{eq:SVFdef}
F_{x_0,g,(R_k)_{k=1}^\infty} := (f_k)_{k=1}^\infty \in l^\infty(\mathbb N,l^\infty(X)) \ \text{where} \ f_k := (1-e_{R_k})g.
\end{equation}

\begin{lemma}
\label{lem:SVF}
Fix a proper metric space $X$ and a point $x_0 \in X$.
For a Higson function $g \in C_{lh}(X)$ and a sequence $(R_k) \in (0,\infty)$ such that $\lim_{k\to\infty} R_k = \infty$, let $F_{x_0,g,(R_k)_{k=1}^\infty}$ be as defined in \eqref{eq:SVFdef}.
Then
\[ F_{x_0,g,(R_k)_{k=1}^\infty} \in \SV(X). \]
\end{lemma}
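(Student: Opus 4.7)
Boundedness of $(f_k)_{k=1}^\infty$ is immediate since $\|f_k\| \leq \|1-e_{R_k}\|\cdot\|g\| \leq \|g\|$. The plan is therefore to verify the Lipschitz condition: given $L>0$, I would produce $k_0$ such that every $f_k$ with $k\geq k_0$ is $L$-Lipschitz.

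First I would use the Lipschitz--Higson property of $g$ to produce a compact set $A\subseteq X$ with $g|_{X\setminus A}$ being $(L/2)$-Lipschitz. Since $A$ is compact (hence bounded), and since $R_k\to\infty$, there is a $k_0$ such that for every $k\geq k_0$ both $A\subseteq \bar{B}_{R_k}(x_0)$ and $R_k\geq 2\|g\|/L$ hold.

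The core step is to replace $g$ by a globally Lipschitz function without altering $f_k$. Applying McShane's extension theorem to the $(L/2)$-Lipschitz function $g|_{X\setminus A}$, and then truncating at $\pm\|g\|$ (which preserves the Lipschitz constant), produces $\tilde g\in C_b(X)$ that is $(L/2)$-Lipschitz, has $\|\tilde g\|\leq\|g\|$, and agrees with $g$ on $X\setminus A$. For $k\geq k_0$, the function $1-e_{R_k}$ vanishes on $\bar{B}_{R_k}(x_0)\supseteq A$, so the difference $g-\tilde g$ (which is supported in $A$) is annihilated by $1-e_{R_k}$, giving
\[
f_k \;=\; (1-e_{R_k})g \;=\; (1-e_{R_k})\tilde g.
\]

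Finally, the product rule for bounded Lipschitz functions (the one used in the footnote verifying that $\SV(X)$ is a $\mathrm C^*$-subalgebra) applied to the factors $1-e_{R_k}$ (which is $R_k^{-1}$-Lipschitz with norm $\leq 1$) and $\tilde g$ (which is $(L/2)$-Lipschitz with norm $\leq \|g\|$) yields that $f_k$ is $(L/2+\|g\|/R_k)$-Lipschitz, which is at most $L$ by our choice of $R_k$. The only delicate point in this argument is the substitution $g\leadsto\tilde g$; this is legitimate precisely because the discrepancy $g-\tilde g$ is confined to $A$, which is exactly where $1-e_{R_k}$ has been arranged to vanish.
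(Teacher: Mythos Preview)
Your proof is correct and follows the same overall strategy as the paper: arrange that $1-e_{R_k}$ is eventually $(L/2)$-Lipschitz and vanishes on the compact set where $g$ may fail to be $(L/2)$-Lipschitz, then use the product rule for bounded Lipschitz functions. The only difference is in how the last step is executed. The paper (after normalizing to $\|g\|\leq 1$) simply asserts that from these facts ``it is easy to see'' that $f_k$ is $L$-Lipschitz, leaving implicit the small case split needed when one of the two test points lies inside $\bar B_{R_k}(x_0)$ and the other does not. You instead invoke McShane's extension theorem to replace $g$ by a globally $(L/2)$-Lipschitz $\tilde g$ without changing $f_k$, so that the product rule applies on all of $X$ at once with no case analysis. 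This is a tidy way to sidestep the split, at the cost of importing an extra (standard) lemma; it is a minor technical variation rather than a genuinely different route.
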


\begin{proof}
Without loss of generality, we may assume $\|g\| \leq 1$.
As in the definition of $\SV(X)$, let $L>0$ be given.
Pick $M \geq 2/L$ such that $g$ is $(L/2)$-Lipschitz on $X \setminus B_M(x_0)$.
Pick $k_0$ such that $R_k \geq M$ for all $k \geq k_0$.
For $k \geq k_0$, using the fact that $(1-e_{R_k})$ is $(L/2)$-Lipschitz and vanishes on $B_M(x_0)$, it is easy to see that $f_k=g(1-e_{R_k})$ is $L$-Lipschitz.
\end{proof}
\bigskip

\begin{proof}[Proof of Theorem \ref{thm:MainThm} (i) $\Rightarrow$ (iii)]
By Lemma \ref{lem:LipHig}, (iii) is equivalent to $[g,b] \in \mathcal K(X,B)$ for all $g \in C_{lh}(X)$, which is the statement we will prove assuming (i).
Assume that $[b,g] \not\in \mathcal K(X,B)$ for some $g \in C_{lh}(X)$.
Set $\e:=\|[b,g]+\mathcal K(X,B)\|$.
Fix a point $x_0 \in X$, let $(R_k)_{i=1}^\infty \subset (0,\infty)$ be any sequence such that $\lim_{i\to\infty} R_i = \infty$, and define $(f_k)_{k=1}^\infty := F_{x_0,g,(R_k)_{k=1}^\infty}$ as in \eqref{eq:SVFdef}, that is,
\[ f_k := (1-e_{R_k})g. \]
By Lemma \ref{lem:SVF}, $(f_k)_{k=1}^\infty \in \SV(X)$.
Since $e_{R_k}\in C_0(X)$ for each $k$, using the condition \eqref{eq:ideal-cond} we obtain
\[ [f_k,b] = [g,b] - [e_{R_k}g,b] \in [g,b] + \mathcal K(X,B) \]
and therefore by the definition of $\e$, 
\[ \|[f_k,b]\| \geq \e. \]
Consequently, $\lim_{k\to \oomega} \|[f_k,b]\| \geq \e$, so that $b$ does not commute with the image of $(f_k)_{k=1}^\infty$ in $\SV_\oomega(X)$.
\end{proof}
\medskip

Before we embark on the proof of Theorem \ref{thm:MainThm} (iii) $\implies$ (i), note that as a consequence of the Spectral Theorem, we may extend the $X$-module structure on $\mathcal{B}(\mathcal{H})$ from bounded continuous functions to bounded Borel functions on $X$.
This is convenient in the following proof, since it allows us to easily ``cut up'' operators on $\mathcal{H}$ using characteristic functions of Borel sets in $X$.
(Of course, we cannot assume the algebra $B$ is closed under multiplication by these bounded Borel functions.)
We opt for this approach for the sake of readability, although it is possible to modify the proof to only use continuous functions for the price of more approximations.

\begin{proof}[Proof of Theorem \ref{thm:MainThm} (iii) $\Rightarrow$ (i)]
Since each of $\SV_\oomega(X)$, $C_{h}(X)$, and $\mathcal K(X,B)$ is $^*$-closed, it suffices to prove the theorem in the case that $b$ is self-adjoint.
We henceforth assume that $b=b^*$.
Fix a point $x_0 \in X$, and to shorten notation in this proof, set
\[ B_R:= B_R(x_0). \]
For each $R>0$, we will use $\chi_{B_R}$ to denote the support projection of a function whose cozero set is $B_R$.

Assume that $[b,f] \neq 0$ for some $f \in \SV_\oomega(X)$.
Then in fact, $[b,f] \neq 0$ for some $f=(f_k)_{k=1}^\infty$ for which each $f_k$ is a self-adjoint contraction; we fix this sequence.
Let $0 < \e <\|[b,f]\|$.

Consider now two cases.

\textit{Case 1}. There exists $R_0>0$ such that for all $S>0$ there are infinitely many $k$ for which
\[ \|\chi_{B_{R_0}}[b,f_k](1-\chi_{B_S})\| > \frac\e5. \]

Roughly, for this case, we will construct some $g$ (of the form \eqref{eq:HigsongDef}) such that the block-column of $[b,g]$ corresponding to $B_{R_0}$ doesn't converge to $0$ at $\infty$.

Note that as $k \to \infty$, $f_k|_{B_{R_0}}$ tends towards being constant; so without loss of generality, we may assume that $f_k|_{B_{R_0}}$ is constant.
Adding a scalar to each $f_k$, we arrive at another sequence $\left(\hat f_k\right)_{n=1}^\infty$ with the same properties as $(f_k)_{n=1}^\infty$ (that is, self-adjoint and satisfying the Case 1 condition), such that $\hat f_k|_{B_{R_0}} \equiv 0$.
From this it follows that $\chi_{B_{R_0}}[b,\hat f_k] = \chi_{B_{R_0}}b\hat f_k$ for all $n$, so that we have:
for all $S>0$ there exist infinitely many $k$ such that
\[ \|\chi_{B_{R_0}}b\hat f_k(1-\chi_{B_S})\| > \frac\e5. \]

Using $R_0$ as above and $k_0:=0$, recursively choose $k_1 < k_2 < \cdots$ and $R_1 \geq 6R_0$, $R_2 \geq 6R_1$, $\dots$ as follows.
Having chosen $R_{i-1}$, pick $k_i>k_{i-1}$ such that
\[ \|\chi_{B_{R_0}}a\hat f_{k_i}(1-\chi_{B_{6R_{i-1}}})\| > \frac\e5 \quad\text{and}\quad
  \|\hat f_{k_i}\chi_{B_{6R_{i-1}}}\|\leq \frac{\e}{10\|b\|}.\]
(The second inequality can be arranged as $\hat f_{k}$ converge to $0$ on any given bounded subset of $X$.)
Then, since $(e_R)_{R=1}^\infty$ converges strongly to $1$, pick $R_i \geq 6R_{i-1}$ such that
\[ \|\chi_{B_{R_0}}b\hat f_{k_i}(1-\chi_{B_{6R_{i-1}}})e_{R_i}\|>\frac\e5. \]
Since $e_{R_i}-e_{3R_{i-1}}$ differs from $(1-\chi_{B_{6R_{i-1}}})e_{R_i}$ only on $B_{6R_{i-1}}$,
it follows that
\[ \|\chi_{B_{R_0}}b\hat f_{k_i}(e_{R_i}-e_{3R_{i-1}})\|>\frac\e{10}. \]

Using these recursive choices, define $g:=g_{x_0,(\hat f_{k_i})_{i=1}^\infty,(R_i)_{i=0}^\infty} \in C_{lh}(X) \subseteq C_h(X)$ by Lemma \ref{lem:Higsong}.
If $[b,g] \in \mathcal K(X,B)$, then $\|[b,g](1-\chi_{B_S})\|\to 0$ as $S \to \infty$.
However, given $S \geq R_0$, there exists $i$ such that $3R_{i-1} > S$.
Then
\begin{align*}
\|[b,g](1-\chi_{B_S})\|
&\geq \|\chi_{B_{R_0}}[b,g](1-\chi_{B_S})(\chi_{B_{2R_i}} - \chi_{B_{3R_{i-1}}})\| \\
&= \|\chi_{B_{R_0}}b\hat f_{k_i}(e_{R_i}-e_{3R_{i-1}})\| \\
&> \frac\e{10},
\end{align*}
which is a contradiction.
This concludes the proof in Case 1.
\medskip

\textit{Case 2}. For every $R>0$, there exists $S>0$ such that, for all but finitely many $k \in \mathbb N$,
\[ \|\chi_{B_R}[b,f_k](1-\chi_{B_S})\| \leq \frac\e5. \]
Without loss of generality, we may assume that $S>R$.

Roughly, for this case, we will construct some $g$ (of the form \eqref{eq:HigsongDef}) such that the certain blocks on the diagonal of $[b,g]$ don't converge to $0$ at $\infty$.

In preparation for this, suppose we are given $R>0$ and $K\in \mathbb N$.
Let $S$ be given by the Case 2 property.
Then there exists $k \geq K$ such that
\[ \|\chi_{B_R}[b,f_k](1-\chi_{B_S})\| \leq \frac\e5, \]
and in addition, $\|[b,f_{k}]\| > \e$ and $f_{k}|_{B_{S}}$ is $\frac\e5$-approximately constant.
From the latter property, it follows that there is a scalar $\gamma$ such that $f_{k}|_{B_{S}} \approx_{\e/10} \gamma$, so that
\begin{equation}
\label{eq:HigsonCommEquivalenceIneq1}
 \|\chi_{B_{S}}[b,f_k]\chi_{B_{S}}\| \leq 2\cdot\frac\e{10}\|b\| \leq \frac\e5.
\end{equation}
Since $b$ and $f_{k}$ are self-adjoint,
\begin{equation}
\label{eq:HigsonCommEquivalenceIneq2}
 \|(1-\chi_{B_{S}})[b,f_k]\chi_{B_{R}})\| = \|\chi_{B_{R}}[b,f_k](1-\chi_{B_{S}})\| \leq \frac\e5.
\end{equation}
We now cut up the operator $T=[b,f_k]$ as follows:

\setlength{\unitlength}{1pt}

\begin{picture}(381,264)
\put(150.5,170.5){$\chi_{B_R}T\chi_{B_S}$}  
\put(290.5,170.5){$\chi_{B_R}T(1-\chi_{B_S})$}  
\put(77,100.5){$(\chi_{B_S}-\chi_{B_R})T\chi_{B_R}$}  
\put(84.5,30.5){$(1-\chi_{B_S})T\chi_{B_R}$}  
\put(224,65.5){$(1-\chi_{B_R})T(1-\chi_{B_R})$}  
\put(72,210){\line(1,0){309}}  
\put(72,140){\line(1,0){309}}  
\put(72,70){\line(1,0){103}}  
\put(72,210){\line(0,-1){210}}  
\put(175,140){\line(0,-1){140}}  
\put(278,210){\line(0,-1){70}}  
\put(115.5,220){$B_R$}  
\put(167.5,249){$B_S$}  
\put(72,224.5){\line(1,0){38.5}}  
\put(175,224.5){\line(-1,0){38.5}}  
\put(72,224.5){\line(1,1){10}}  
\put(72,224.5){\line(1,-1){10}}  
\put(175,224.5){\line(-1,1){10}}  
\put(175,224.5){\line(-1,-1){10}}  
\put(72,253.5){\line(1,0){90.5}}  
\put(278,253.5){\line(-1,0){90.5}}  
\put(72,253.5){\line(1,1){10}}  
\put(72,253.5){\line(1,-1){10}}  
\put(278,253.5){\line(-1,1){10}}  
\put(278,253.5){\line(-1,-1){10}}  
\put(46,170.5){$B_R$}  
\put(10.5,135.5){$B_S$}  
\put(54,210){\line(0,-1){25.5}}  
\put(54,140){\line(0,1){25.5}}  
\put(54,210){\line(1,-1){10}}  
\put(54,210){\line(-1,-1){10}}  
\put(54,140){\line(1,1){10}}  
\put(54,140){\line(-1,1){10}}  
\put(18,210){\line(0,-1){60.5}}  
\put(18,70){\line(0,1){60.5}}  
\put(18,210){\line(1,-1){10}}  
\put(18,210){\line(-1,-1){10}}  
\put(18,70){\line(1,1){10}}  
\put(18,70){\line(-1,1){10}}

\end{picture}

That is, we use the equality
\begin{align*}
T &= (1-\chi_{B_R})T(1-\chi_{B_R}) +
\chi_{B_R}T\chi_{B_S} + \chi_{B_R}T(1-\chi_{B_S}) \\
&\qquad \quad + (\chi_{B_S}-\chi_{B_R})T\chi_{B_R} 
+ (1-\chi_{B_S})T\chi_{B_R} 
\end{align*}
and the reverse triangle inequality to deduce 
\begin{eqnarray}
\notag
&&\hspace*{-4em} \|(1-\chi_{B_{R}})[b,f_k](1-\chi_{B_{R}})\| \\
\notag
&\geq&
\|[b,f_k]\| - \big(
\|\chi_{B_R}[b,f_k]\chi_{B_S}\| + \|\chi_{B_R}[b,f_k](1-\chi_{B_S})\| \\
\notag
&&\qquad
+ \|(\chi_{B_S}-\chi_{B_R})[b,f_k]\chi_{B_R}\| + \|(1-\chi_{B_S})[b,f_k]\chi_{B_R}\|\big) \\
\notag
&\stackrel{\eqref{eq:HigsonCommEquivalenceIneq1},\eqref{eq:HigsonCommEquivalenceIneq2}}\geq&
\|[b,f_k]\| - 4\cdot\frac\e5 \\ 
\notag 
&>& \e-\frac{4\e}5  \\
&=& \frac\e5.
\end{eqnarray}
In summary, we have shown that for every $R>0$ and $K \in \mathbb N$, there exists $k \geq K$ such that 
\begin{equation}
\label{eq:HigsonCommEquivalenceRevTriangle}
\|(1-\chi_{B_{R}})[b,f_k](1-\chi_{B_{R}})\| > \frac\e5.
\end{equation}

Now, start with $R_0:=1$ and $k_0 := 0$, and (as in Case 1) we will choose $k_1 < k_2 < \cdots$ and $R_1 \geq 6R_0$, $R_2 \geq 6R_1$, $\dots$ recursively.
Given $R_{i-1}$, pick $k > k_{i-1}$ satisfying \eqref{eq:HigsonCommEquivalenceRevTriangle} for $R:=6R_{i-1}$, and set $k_i$ equal to this $k$.
That is, $k_i>k_{i-1}$ satisfies
\[
\|(1-\chi_{B_{6R_{i-1}}})[b,f_{k_i}](1-\chi_{B_{6R_{i-1}}})\| > \frac\e5. \]
Then, since $(\chi_{B_R})_{R=1}^\infty$ converges strongly to $1$, there exists $R_i \geq 6R_{i-1}$ such that
\[
\|\chi_{B_{R_i}}(1-\chi_{B_{6R_{i-1}}})[b,f_{k_i}](1-\chi_{B_{6R_{i-1}}})\chi_{B_{R_i}}\| > \frac\e5. \]
Note that $\chi_{B_{R_i}}(1-\chi_{B_{6R_{i-1}}}) = \chi_{B_{R_i}}-\chi_{B_{6R_{i-1}}}$.

Using these recursive choices, define $g:=g_{x_0,(f_{k_i})_{i=1}^\infty,(R_i)_{i=0}^\infty} \in C_{lh}(X) \subseteq C_h(X)$ by Lemma \ref{lem:Higsong}.
If $[b,g] \in \mathcal K(X,B)$, then $\|[b,g](1-\chi_{B_S})\|\to 0$ as $S \to \infty$.
However, given $S >0$, there exists $i$ such that $6R_{i-1} > S$.
Then
\begin{align*}
\left\|[b,g]\left(1-\chi_{B_S}\right)\right\|
&\geq \left\|(\chi_{B_{R_i}}-\chi_{B_{6R_{i-1}}})[b,g](\chi_{B_{R_i}}-\chi_{B_{6R_{i-1}}})\right\| \\
&= \left\|(\chi_{B_{R_i}}-\chi_{B_{6R_{i-1}}})[b,f_{k_i}](\chi_{B_{R_i}}-\chi_{B_{6R_{i-1}}})\right\| \\
&> \frac\e5,
\end{align*}
again a contradiction.
This concludes the proof.
\end{proof}
\bigskip

\section{More about \texorpdfstring{$\SV_\oomega\left(X\right)$}{VL_infty(X)}}
\label{sec:MoreAboutVL}

\subsection{To what extent does \texorpdfstring{$\SV_\oomega\left(X\right)$}{SVomega(X)} determine \texorpdfstring{$X$}{X}?}

If $X$ is a metric space, then we say that a set $E \subseteq X \times X$ is \emph{uniformly bounded} (also called a \emph{metric entourage}) if there exists $R>0$ such that
\[ E \subseteq \{(x,y) \in X \times X: d(x,y) \leq R\}. \]

\begin{defn}
  \label{def:coarse-maps}
  Let $X$ and $Y$ be metric spaces. We say that a (not necessarily continuous) function $\phi: X\to Y$ is:
  \begin{itemize}
    \item \emph{bornologous}, if for every $R\geq0$ there exists $S\geq 0$, such that for all $x,y\in X$, $d(x,y)\leq R$ implies $d(\phi(x),\phi(y))\leq S$;
    \item \emph{cobounded}, if $f^{-1}(y)$ is bounded for every $y\in Y$;
    \item \emph{coarse}, if it is both cobounded and bornologous;
    \item \emph{a coarse equivalence}, if it is bornologous, and there exists a bornologous $\psi: Y\to X$, such that both $\psi\circ\phi$ and $\phi\circ\psi$ are uniformly close to the identity maps, i.e., their graphs are uniformly bounded subsets of $X\times X$ and $Y\times Y$ respectively. Note that in this case both $\phi$ and $\psi$ are automatically coarse;
    \item \emph{locally Lipschitz}, if there exist $\delta>0$ and $T\geq0$, such that $d(x,y)\leq\delta$ implies $d(f(x),f(y))\leq Td(x,y)$, $x,y\in X$.
    \item \emph{a Lip-coarse equivalence}, if it is a coarse equivalence, it is locally Lipschitz, and in the definition of coarse equivalence, $\psi$ can be chosen to be locally Lipschitz as well.
  \end{itemize}
\end{defn}

Note that traditionally, coarse geometry does not concern itself with local behaviour. However, as our main tool in this piece are Lipschitz functions, we will insist that the maps involved are locally Lipschitz. On the other hand, in the key setting in which the metric spaces involved are uniformly discrete, this requirement is automatic, and thus can be ignored.

\begin{prop}
  Let $X$ be a proper metric space. $\SV_\oomega\left(X\right)$ is a Lip-coarse invariant for $X$.
  More precisely, if $X$ and $Y$ are proper metric spaces which are Lip-coarsely equivalent via a (locally Lipschitz) map $\phi:X \to Y$, then composition by $\phi$ induces a $^*$-isomorphism $\SV_\oomega\left(Y\right) \to \SV_\oomega\left(X\right)$.
\end{prop}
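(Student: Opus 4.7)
The plan is to define a candidate $^*$-homomorphism $\Phi: \SV_\oomega(Y) \to \SV_\oomega(X)$ at the level of sequences by $(f_n)_n \mapsto (f_n \circ \phi)_n$, show that it descends to the quotient, and show it is inverted by the analogous map $\Psi$ built from $\psi$. Since $\phi$ is locally Lipschitz (in particular continuous), $f \circ \phi \in C_b(X)$ for any $f \in C_b(Y)$; so the sequence-level map sends bounded sequences to bounded sequences, is clearly a $^*$-homomorphism, and sends null sequences to null sequences since $\|f_n \circ \phi\|_\infty \leq \|f_n\|_\infty$.

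The key step is to verify that $(f_n \circ \phi)_n \in \SV(X)$ whenever $(f_n)_n \in \SV(Y)$. Let $\delta, T$ be the local Lipschitz parameters of $\phi$, and let $M := \sup_n \|f_n\|$. Given $L' > 0$, I would use the bornologous property of $\phi$ at scale $R := 2M/L'$ to produce $S > 0$ with $d(x,x') \leq R \Rightarrow d(\phi(x), \phi(x')) \leq S$. I then take $L := \min(L'/T, L' \delta / S)$ and invoke the very Lipschitz hypothesis to get $n_0$ such that $f_n$ is $L$-Lipschitz for $n \geq n_0$. For such $n$, I verify $L'$-Lipschitzness of $f_n \circ \phi$ by splitting into three regimes: for $d(x,x') \leq \delta$ the local Lipschitz condition gives $|f_n(\phi x) - f_n(\phi x')| \leq L T d(x,x') \leq L' d(x,x')$; for $\delta \leq d(x,x') \leq R$ the bornologous bound combined with $d(x,x') \geq \delta$ gives $|f_n(\phi x) - f_n(\phi x')| \leq L S \leq L' \delta \leq L' d(x,x')$; and for $d(x,x') > R$ boundedness gives $|f_n(\phi x) - f_n(\phi x')| \leq 2 M \leq L' R < L' d(x,x')$.

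With $\Phi$ (and the symmetrically defined $\Psi$) in hand, it remains to show they are mutually inverse, for which it suffices to verify $\|f_n \circ \phi \circ \psi - f_n\|_\infty \to 0$ for $(f_n) \in \SV(Y)$, and symmetrically in the other direction. Using that $\phi \circ \psi$ is uniformly close to $\mathrm{id}_Y$, say $\sup_y d(\phi\psi(y), y) \leq K$, together with the fact that $(f_n)$ is eventually $(\epsilon/K)$-Lipschitz for any $\epsilon > 0$, we get $\|f_n \circ \phi \circ \psi - f_n\|_\infty \leq \epsilon$ eventually, so the difference represents the zero class in $\SV_\oomega(Y)$.

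The main obstacle is the first technical step: since $\phi$ is only locally Lipschitz, rather than globally Lipschitz, $f_n \circ \phi$ is not automatically globally $L T$-Lipschitz even when $f_n$ is $L$-Lipschitz. The three-regime analysis is what bridges this gap, combining local Lipschitz control at small scale, bornologous control at intermediate scale, and mere boundedness at large scale. This is precisely why the definition of Lip-coarse equivalence insists on local Lipschitzness in addition to the coarse conditions; for uniformly discrete $X$ (where local Lipschitzness is vacuous) only the bornologous middle regime matters and the argument simplifies.
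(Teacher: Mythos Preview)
Your proposal is correct and follows essentially the same approach as the paper: define the map at the level of sequences via precomposition, verify it preserves $\SV$ by a three-regime case analysis on $d(x,x')$ (small scale via local Lipschitzness, intermediate scale via bornologousness, large scale via boundedness), and then check mutual invertibility modulo null sequences using uniform closeness of $\phi\circ\psi$ to the identity. The only cosmetic difference is that the paper normalizes to $\|f_n\|\leq 1$ (so $R=2/L$) whereas you carry $M=\sup_n\|f_n\|$ explicitly; your choice of $L=\min(L'/T,\,L'\delta/S)$ is equivalent to the paper's choice after this normalization.
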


\begin{proof}
  Suppose that $X,Y$ are Lip-coarsely equivalent, so that there are locally Lipschitz coarse maps $\phi:X \to Y$ and $\psi:Y \to X$ such that the graphs of $\phi\circ\psi$ and $\psi\circ\phi$ are uniformly bounded. Denote by $\delta>0$ and $T\geq0$ the constants of local Lipschitzness of $\phi$. As locally Lipschitz maps are continuous, $\phi$ and $\psi$
induce maps $\phi_*:l^\infty\left(\mathbb N,C_b(Y)\right)\to l^\infty\left(\mathbb N,C_b(X)\right)$ and $\psi_*:l^\infty\left(\mathbb N,C_b(X)\right) \to l^\infty\left(\mathbb N,C_b(Y)\right)$.

First we show that $\phi_*\left(\SV\left(Y\right)\right) \subseteq \SV\left(X\right)$.
Surely, let $\left(f_n\right)_{n=1}^\infty \in \SV\left(Y\right)$ with $\left\|\left(f_n\right)_{n=1}^\infty\right\|\leq 1$.
Let $L>0$.
Since $\phi$ is a coarse map, there exists $S>0$ such that if $x,y \in X$ satisfy $d(x,y)\leq 2/L$ then $d(\phi(x),\phi(y))<S$.
Without loss of generality, we can assume that $S>\delta T$.
Since $(f_n)_{n=1}^\infty \in \SV(Y)$, there exists $n_0$ such that $f_n$ is $(L\delta/S)$-Lipschitz for all $n \geq n_0$.
For $n\geq n_0$, let us show that $f_n \circ \phi$ is $L$-Lipschitz.
Let $x,y \in X$, there are three cases:
\begin{itemize}
  \item If $d(x,y) > 2/L$ then $|f_n(x)-f_n(y)|\leq 2\|f_n\| < Ld(x,y)$.
  \item If $\delta\leq d(x,y)\leq 2/L$, then $d(\phi(x),\phi(y))<S$. Since
      $f_n$ is $(L\delta/S)$-Lipschitz, $$\|f_n\circ\phi(x)-f_n\circ\phi(y)\| \leq L\delta \leq
      Ld(x,y).$$
  \item If $d(x,y)<\delta$, then
    $$
    \|f_n(\phi(x))-f_n(\phi(y))\| \leq \frac{L\delta}{S}d(\phi(x),\phi(y))\leq \frac{L\delta T}{S}d(x,y)<Ld(x,y).
    $$
\end{itemize}
Thus, $\phi_*$ induces a map $\theta_\phi:\SV_\oomega\left(Y\right) \to \SV_\oomega\left(X\right)$; likewise, $\psi_*$ induces a map $\theta_\psi:\SV_\oomega\left(X\right) \to \SV_\oomega\left(Y\right)$.
Let us show that these maps are inverses.
By the symmetry of their definition, it suffices to show that $\theta_\psi\circ\theta_\phi = \id_{\SV_\oomega\left(Y\right)}$.

Let $\left(f_n\right)_{n=1}^\infty \in \SV_\oomega\left(Y\right)$.
Let $\Gamma\left(\phi\circ\psi\right) \subset Y \times Y$ denote the graph of $\phi\circ\psi$.
Since this is uniformly bounded, given $\e>0$, we may find $n_0$ such that, for $n \geq n_0$ and $\left(x,y\right) \in \Gamma\left(\phi\circ\psi\right)$,
\[ |f_n\left(x\right)-f_n\left(y\right)|\leq\e. \]
In other words, for $n \geq n_0$, $|f_n\left(x\right)-f_n\left(\phi\left(\psi\left(x\right)\right)\right)|\leq\e$, and thus,
\[ \|f_n-\psi_*\circ\phi_*\left(f_n\right)\|\leq \e. \]
Consequently, $\left\|\left(f_n\right)_{n=1}^\infty-\theta_\psi\circ\theta_\phi\left(\left(f_n\right)_{n=1}^\infty\right)\right\|_{\SV_\oomega\left(Y\right)}\leq \e$.
Since $\e$ is arbitrary, $\theta_\psi\circ\theta_\phi\left(\left(f_n\right)_{n=1}^\infty\right) = \left(f_n\right)_{n=1}^\infty$ in $\SV_\oomega(Y)$.
\end{proof}
\bigskip

\begin{lemma}
\label{lem:ControlledChar}
Let $X$ be a metric space, and let $E \subseteq X \times X$.
The following are equivalent:
\begin{enumerate}
\item $E$ is uniformly bounded.
\item For every $\left(f_n\right)_{n=1}^\infty \in \SV\left(X\right)$ and $\e > 0$, there exists $n_0$ such that, if $n \geq n_0$ and $\left(x,y\right) \in E$ then $|f_n\left(x\right)-f_n\left(y\right)|<\e$.
\item For every $\left(f_n\right)_{n=1}^\infty \in \SV\left(X\right)$, there exists $n_1$ such that, if $\left(x,y\right) \in E$ then $|f_{n_1}\left(x\right)-f_{n_1}\left(y\right)|<1$.
\end{enumerate}
\end{lemma}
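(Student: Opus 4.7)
The plan is to establish the cycle \textup{(i)} $\Rightarrow$ \textup{(ii)} $\Rightarrow$ \textup{(iii)} $\Rightarrow$ \textup{(i)}. The first two implications are essentially immediate from the definitions, while the third (which is really the interesting direction) is a contrapositive argument based on explicitly constructing a very Lipschitz sequence of ``bump'' functions.

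For \textup{(i)} $\Rightarrow$ \textup{(ii)}, suppose $d(x,y) \leq R$ for all $(x,y) \in E$, and let $(f_n) \in \SV(X)$ and $\e>0$ be given. Using the definition of $\SV(X)$ with Lipschitz constant $L := \e/(2R)$ (or $\e$ if $R=0$), we obtain $n_0$ such that $f_n$ is $L$-Lipschitz for $n \geq n_0$; then $|f_n(x) - f_n(y)| \leq L \cdot R = \e/2 < \e$ for $(x,y) \in E$. For \textup{(ii)} $\Rightarrow$ \textup{(iii)}, take $\e = 1$ and let $n_1 := n_0$.

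The substantive direction is \textup{(iii)} $\Rightarrow$ \textup{(i)}, which I would prove by contrapositive. Assume $E$ is not uniformly bounded. Then for each $n \in \N$ there exist $(x_n,y_n) \in E$ with $d(x_n,y_n) > n$. Define
\[ f_n(x) := \min\{d(x,x_n)/n,\,1\}, \qquad x \in X. \]
Each $f_n$ is a continuous contraction on $X$, so $(f_n) \in l^\infty(\N, C_b(X))$. Moreover $f_n$ is $(1/n)$-Lipschitz, so for any $L>0$, choosing $n \geq 1/L$ makes $f_n$ $L$-Lipschitz; hence $(f_n) \in \SV(X)$. On the other hand, $f_n(x_n) = 0$ and, since $d(x_n,y_n)/n > 1$, we have $f_n(y_n) = 1$, so $|f_n(x_n) - f_n(y_n)| = 1$ for every $n$. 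Therefore no single index $n_1$ can satisfy the conclusion of \textup{(iii)} with this sequence, contradicting \textup{(iii)}.

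The only small point requiring any care is the Lipschitz verification in \textup{(i)} $\Rightarrow$ \textup{(ii)} (ensuring a strict inequality, handled trivially by halving $\e$) and the construction in \textup{(iii)} $\Rightarrow$ \textup{(i)}; the latter is the ``hard'' step in name only, as the required bump functions $\min\{d(\cdot,x_n)/n, 1\}$ are standard and automatically yield a very Lipschitz sequence because $1/n \to 0$. No separation or structural hypotheses on $X$ are needed.
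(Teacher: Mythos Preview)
Your proof is correct and follows essentially the same approach as the paper: both argue the cycle (i) $\Rightarrow$ (ii) $\Rightarrow$ (iii) $\Rightarrow$ (i), with the contrapositive for the last step constructing a $(1/n)$-Lipschitz function separating $x_n$ and $y_n$. The only difference is cosmetic: you write down the explicit bump $f_n(x)=\min\{d(x,x_n)/n,1\}$, whereas the paper simply asserts the existence of such an $f_n$.
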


\begin{proof}
(i) $\Rightarrow$ (ii) is by the definition of $\SV\left(X\right)$.
(ii) $\Rightarrow$ (iii) is trivial.

For (iii) $\Rightarrow$ (i), let $E \subseteq X \times X$ be a set that isn't uniformly bounded, and let us show that there exists $\left(f_n\right)_{n=1}^\infty \in \SV\left(X\right)$ such that, for all $n$ there exists $\left(x_n,y_n\right)_{n=1}^\infty \in E$ such that $|f_n\left(x\right)-f_n\left(y\right)|=1$.

For each $n$, there exists $\left(x_n,y_n\right) \in E$ such that $d(x,y)>n$.
Thus there exists a $(1/n)$-Lipschitz function $f_n:X \to \R$ such that $f_n(x_n)=0$, and $f_n(y_n)=1$.
It follows that $\left(f_n\right)_{n=1}^\infty \in \SV\left(X\right)$, showing that (iii) doesn't hold.
\end{proof}
\bigskip

\begin{prop}
Let $X,Y$ be metric spaces.
Let $\phi:Y \to X$ be a function.
\begin{enumerate}
\item If $\phi_*\left(\SV\left(X\right)\right) \subseteq \SV\left(Y\right)$ then $\phi$ is a bornologous map;
\item If, moreover, the induced map $\theta_\phi:\SV_\oomega\left(X\right) \to \SV_\oomega\left(Y\right)$ is an isomorphism, then $\phi$ is a coarse equivalence.
\end{enumerate}
\end{prop}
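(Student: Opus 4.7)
The approach to part (i) is to apply Lemma \ref{lem:ControlledChar} directly. Fix $R \geq 0$; to show $\phi$ is bornologous, I will show that the set $E_R := \{(\phi(y), \phi(y')) : d(y,y') \leq R\} \subseteq X \times X$ is uniformly bounded, by verifying condition (iii) of that lemma for $E_R$. Given any $(f_n)_{n=1}^\infty \in \SV(X)$, the hypothesis $\phi_*(\SV(X)) \subseteq \SV(Y)$ gives $(f_n \circ \phi)_{n=1}^\infty \in \SV(Y)$, so for some $n_1$ the function $f_{n_1} \circ \phi$ is Lipschitz with constant less than $1/(R+1)$. Then whenever $(x,x') \in E_R$ with $x = \phi(y)$, $x' = \phi(y')$, $d(y,y') \leq R$, we get $|f_{n_1}(x) - f_{n_1}(x')| \leq R/(R+1) < 1$, as required.

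For part (ii), the plan is to extract two metric properties of $\phi$ from the fact that $\theta_\phi$ is an isomorphism, and then package them (together with (i)) into a bornologous quasi-inverse. The two properties are: (a) $\phi(Y)$ is coarsely dense in $X$, i.e., $X = N_{S_0}(\phi(Y))$ for some $S_0 \geq 0$, and (b) $\phi$ is \emph{coarsely injective}, meaning that for every $R \geq 0$ there exists $S \geq 0$ with $d(\phi(y), \phi(y')) \leq R \implies d(y,y') \leq S$.

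For (a), I argue by contradiction using injectivity of $\theta_\phi$: if $\phi(Y)$ were not coarsely dense, there would exist $x_n \in X$ with $d(x_n, \phi(Y)) > n$. Then the $(1/n)$-Lipschitz bumps $h_n(x) := \max\{0, 1 - d(x,x_n)/n\}$ satisfy $\|h_n\|_\infty = 1$ while $h_n \circ \phi \equiv 0$, which supplies a nonzero element of $\SV_\oomega(X)$ in the kernel of $\theta_\phi$, a contradiction. For (b), I argue by contradiction using surjectivity: failure at some $R$ provides $(y_n, y'_n)$ with $d(\phi(y_n), \phi(y'_n)) \leq R$ yet $d(y_n, y'_n) > n$, and then as in the proof of Lemma \ref{lem:ControlledChar} the functions $g_n(y) := \min\{1, d(y, y_n)/n\}$ are $(1/n)$-Lipschitz with $g_n(y_n) = 0$, $g_n(y'_n) = 1$, hence $(g_n) \in \SV(Y)$. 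Surjectivity of $\theta_\phi$ yields $(f_n) \in \SV(X)$ with $\|f_n \circ \phi - g_n\|_\infty \to 0$. For all sufficiently large $n$, $f_n$ is Lipschitz with constant less than $1/(3R)$, so $|f_n(\phi(y_n)) - f_n(\phi(y'_n))| \leq 1/3$, forcing $|g_n(y_n) - g_n(y'_n)| < 1$ eventually, contradicting the exact equality $|g_n(y_n) - g_n(y'_n)| = 1$.

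Finally, (a) and (b) yield the quasi-inverse by a standard construction: choose, for each $x \in X$, some $\psi(x) \in Y$ with $d(\phi(\psi(x)), x) \leq S_0$. Then $\phi \circ \psi$ is $S_0$-close to $\id_X$ by construction; applying (b) with $R = S_0$ to the estimate $d(\phi(\psi(\phi(y))), \phi(y)) \leq S_0$ gives $d(\psi(\phi(y)), y) \leq S$ uniformly in $y$. For bornologousness of $\psi$, if $d(x_1, x_2) \leq R$ then $d(\phi(\psi(x_1)), \phi(\psi(x_2))) \leq R + 2S_0$, and (b) bounds $d(\psi(x_1), \psi(x_2))$ accordingly. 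Combined with (i), this shows $\phi$ is a coarse equivalence. The main obstacle is the coarse-injectivity step (b), where one must convert a purely point-level metric failure into a norm obstruction in $\SV_\oomega$ and exploit only the surjectivity half of the isomorphism; once (a) and (b) are in hand, the classical packaging into a bornologous quasi-inverse is routine.
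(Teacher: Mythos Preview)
Your proof is correct and follows essentially the same route as the paper. Part (i) is identical; for part (ii) you establish coarse density from injectivity of $\theta_\phi$ and coarse injectivity from surjectivity of $\theta_\phi$ just as the paper does (your contradiction argument for coarse injectivity is the unwound form of the paper's appeal to Lemma~\ref{lem:ControlledChar}), and you additionally spell out the standard construction of the bornologous quasi-inverse $\psi$, which the paper leaves implicit.
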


\begin{proof}
(i):
Let $R>0$.
We must show that $E:=\{(\phi(x),\phi(y)):x,y\in X,\ d(x,y)<R\}$ is uniformly bounded.
To this end, we will verify Lemma \ref{lem:ControlledChar} (iii) for this set.
Therefore, let $\left(f_n\right)_{n=1}^\infty \in \SV\left(X\right)$.

Since $\phi_*\left(\left(f_n\right)_{n=1}^\infty\right) \in \SV\left(Y\right)$, there exists $n_1$ such that $f_{n_1} \circ \phi$ is $R^{-1}$-Lipschitz.
Thus for $(\phi(x),\phi(y)) \in E$, i.e., $d(x,y)<R$,
\[ |f_{n_1}(\phi(x))-f_{n_1}(\phi(x))| = |f_{n_1} \circ \phi(x) - f_{n_1} \circ \phi(y)| < 1, \]
as required.
\medskip

(ii):
We must show two things: (a) for every $R>0$, the set $\{(x,y) \in Y\times Y : d(\phi(x),\phi(y))<R\}$ is uniformly bounded, and (b) there exists $R>0$ such that, for all $x \in X$, there exists $y\in Y$ such that $d\left(x,\phi\left(y\right)\right) <R$.

(a):
Let $R\geq0$ be given.
We will verify Lemma \ref{lem:ControlledChar} (iii) for $E:=\{(x,y) \in Y\times Y : d(\phi(x),\phi(y))<R\}$.
Therefore, let $\left(f_n\right)_{n=1}^\infty \in \SV\left(Y\right)$.

Since $\theta_\phi$ is surjective, there exists $\left(g_n\right)_{n=1}^\infty \in \SV\left(X\right)$ such that $\lim_\oomega \|f_n-g_n\circ\phi\| = 0$.
By Lemma \ref{lem:ControlledChar} (i) $\Rightarrow$ (ii), there exists $n_0$ such that, if $n \geq n_0$ and $w,z \in X$ satisfy $d(w,z) < R$ then $g_n(w) \approx_{1/3} g_n(z)$.
Pick $n \geq n_0$ such that $\|f_n-g_n\circ\phi\| < \frac13$.

Now, let $\left(x,y\right) \in E$, i.e., $d(\phi(x),\phi(y))<R$.
Then
\[ f_n\left(x\right) \approx_{1/3} g_n\left(\phi\left(x\right)\right) \approx_{1/3} g_n\left(\phi\left(y\right)\right) \approx_{1/3} f_n\left(y\right), \]
as required.

(b):
Proof by contradiction.
Suppose for a contradiction that, for every $n$ there exists $x_n \in X$ such that for all $y \in Y$,
\[ \left(x_n,\phi\left(y\right)\right) \geq n. \]

Thus, there exists a $(1/n)$-Lipschitz function $f_n:X\to [0,1]$ such that $f_n(x_n)=1$ and $f_n(\phi(y))=0$ for all $y \in Y$.
Putting these together, we obtain $\left(f_n\right)_{n=1}^\infty \in \SV\left(X\right)$ and $\|f_n\|=1$ (since $f_n\left(x_n\right)=1$), so that $\left\|\left(f_n\right)_{n=1}^\infty\right\|=1$ in $\SV_\oomega\left(X\right)$.
However, since $f_n\left(\phi\left(y\right)\right) = 0$ for all $y \in Y$, it follows that $\theta_\phi\left(\left(f_n\right)_{n=1}^\infty\right) = 0$, which contradicts injectivity of $\theta_\phi$.
\end{proof}
\bigskip

In other words, when $\SV_\oomega\left(X\right) \cong \SV_\oomega\left(Y\right)$, and the isomorphism comes from a map between $Y$ and $X$, it follows that $X$ and $Y$ are coarsely equivalent.
Here is the more interesting question:

\begin{question}
Let $X,Y$ be uniformly discrete metric spaces.
If $\SV_\oomega\left(X\right) \cong \SV_\oomega\left(Y\right)$, must $X$ and $Y$ be coarsely equivalent?
\end{question}
\bigskip

\subsection{The nuclear dimension of \texorpdfstring{$\SV_\oomega\left(X\right)$}{SVomega(X)}}

$\SV\left(X\right)$ and $\SV_\oomega\left(X\right)$ are commutative unital C*-algebras, and therefore by Gelfand's Theorem, each are algebras of continuous functions on a compact Hausdorff space, namely the Gelfand spectrum of the respective algebras.
As these C*-algebras are nonseparable, their spectra are nonmetrizable.
Here we show a relationship between the asymptotic dimension of $X$ and the covering dimension (suitably interpreted) of these spectra.
In fact we use the nuclear dimension of the algebras, which (for the spectra) corresponds to a version of covering dimension which is slightly modified (in this nonseparable case) from the original definition.

\begin{defn}\cite[Definition 2.1]{WinterZacharias:NucDim}
Let $A$ be a C*-algebra and let $d \in \N$.
We say that the \emph{nuclear dimension} of $A$ is at most $d$ if there exists a net $(F_\lambda,\psi_\lambda,\phi_\lambda)$ where $F_\lambda$ is a finite dimensional C*-algebra, $\psi_\lambda:A \to F_\lambda$ and $\phi_\lambda:F_\lambda \to A$ are completely positive maps such that
\[ \lim_\lambda \psi_\lambda(\phi_\lambda(a)) = a, \quad a \in A, \]
$\psi_\lambda$ is contractive, and $F_\lambda$ decomposes into direct summands as $F_\lambda = F_\lambda^{(0)}\oplus \cdots \oplus F_\lambda^{(d)}$ such that $\phi_\lambda|_{F_\lambda^{(i)}}$ is contractive and order zero, for each $i$.
\end{defn}

Since we will be considering nuclear dimension for the commutative and nonseparable C*-algebras $\SV(X)$ and $\SV_\oomega(X)$, let us explain exactly the modification to covering dimension that is entailed by nuclear dimension.
Let $Y$ be a locally compact Hausdorff space; call an open set $U\subseteq Y$ a \emph{preimage-open} set if it is the preimage of an open subset of $\mathbb R$ under a continuous function $Y \to \mathbb R$.
Then the nuclear dimension of $C_0(Y)$ is equal to the smallest number $d$ such that every finite cover of $Y$ consisting of preimage-open sets has a $(d+1)$-colourable refinement consisting of preimage-open sets (see the proof of \cite[Proposition 3.3]{KirchbergWinter:CovDim}, and \cite[Proposition 2.4]{WinterZacharias:NucDim}; this fact is alluded to in the discussion before \cite[Proposition 2.4]{WinterZacharias:NucDim}).
(In the second countable situation, or more generally when $Y$ is a normal space, all open sets are preimage-open, which is why nuclear dimension coincides with the usual definition of covering dimension in this case.)

\begin{defn} (\cite[\S 1.E]{Gromov:AsympInvariants})
\label{def:AsympDim}
Let $X$ be a metric space. Then the asymptotic dimension of $X$ is at most $d\in \mathbb N$, written $\asdim(X) \leq d$, if for every $R>0$, there exists a cover of $X$ of the form $(U^{(i)}_j)_{i=0,\dots,d;\,j\in J}$, such that for each $i=0,\dots,d$, the family $(U^{(i)}_j)_{j\in J}$ is $R$-disjoint and uniformly bounded.
\end{defn}

\begin{prop}
Let $X$ be a metric space.
\label{prop:dimNucEasyDirection}
$\dn\,\SV\left(X\right) \leq \asdim(X)$ and 
$\dn\, \SV_\oomega\left(X\right) \leq \asdim(X)$.
\end{prop}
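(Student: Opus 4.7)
The plan is to use the covering-dimension reformulation of nuclear dimension for unital commutative C*-algebras recalled in the paragraph preceding the proposition: $\dn C(Y)\leq d$ exactly when every finite cover of $Y$ by preimage-open sets admits a $(d+1)$-colourable preimage-open refinement. Since $\SV(X)$ and $\SV_\oomega(X)$ are both commutative and unital, their spectra are compact, so I would work dually with the algebras: given a finite family $g_1,\dots,g_N$ of positive contractions whose sum is bounded below by some $c>0$, the goal is to produce $(d+1)\cdot N$ positive contractions $h_{i,k}$ (with $i\in\{0,\dots,d\}$ and $k\in\{1,\dots,N\}$) that are pairwise orthogonal within each color $i$, each subordinate to $g_k$, and with $\sum_{i,k}h_{i,k}$ bounded below.

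The construction will use asymptotic dimension covers at scales that grow with $n$. Choose sequence representatives $(g_{k,n})_n$; for $n$ large, all $g_{k,n}$ are $L_n$-Lipschitz with $L_n\to 0$ and $\sum_k g_{k,n}\geq c$. Apply $\asdim(X)\leq d$ at parameter $R_n\nearrow\infty$ to obtain covers $(U_{\alpha,n}^{(i)})_{i,\alpha}$ with $R_n$-disjoint color classes and diameter bound $S_n$; pick $R_n$ slowly enough that $L_n S_n\to 0$. Let $(h_{\alpha,n}^{(i)})$ be the associated $(4/R_n)$-Lipschitz partition of unity. For each piece pick $y_{\alpha,n}^{(i)}\in U_{\alpha,n}^{(i)}$ and define the label $k^*(i,\alpha,n):=\arg\max_k g_{k,n}(y_{\alpha,n}^{(i)})$. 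Then set
\[
h_{i,k,n}\;:=\;\sum_{\alpha:\,k^*(i,\alpha,n)=k}h_{\alpha,n}^{(i)},\qquad h_{i,k}\;:=\;(h_{i,k,n})_n.
\]

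The verifications are then routine. The Lipschitz constants of $h_{i,k,n}$ are $O(1/R_n)\to 0$, so each $h_{i,k}$ lies in $\SV(X)$. Within each color $i$, distinct values of $k$ correspond to disjoint $\alpha$-families, so the $h_{i,k}$ have pairwise disjoint supports; and $\sum_{i,k}h_{i,k,n}=\sum_{i,\alpha}h_{\alpha,n}^{(i)}=1$ for large $n$, giving invertibility of the sum. For subordination, on the support of $h_{\alpha,n}^{(i)}$ (contained in the $(R_n/4)$-neighborhood of $U_{\alpha,n}^{(i)}$) with $k^*(i,\alpha,n)=k$, the choice of $k^*$ gives $g_{k,n}(y_{\alpha,n}^{(i)})\geq c/N$, and Lipschitz control yields $g_{k,n}\geq c/N-L_n(R_n/4+S_n)\geq c/(2N)$ on that support for large $n$; hence $h_{i,k,n}\leq (2N/c)\,g_{k,n}$, so $h_{i,k}\leq (2N/c)\,g_k$.

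This argument works for $\SV_\oomega(X)$ essentially verbatim, because representatives can be freely modified by norm-null perturbations (the cut-off trick used repeatedly in the paper), allowing the construction to apply at every $n$. The main obstacle is $\SV(X)$, where the first finitely many $g_{k,n}$ need not be Lipschitz, so both the subordination estimate and the positivity of $\sum_{i,k}h_{i,k,n}$ can fail for small $n$. A repair is to modify the construction at those initial indices --- for instance, by replacing each $h_{\alpha,n}^{(i)}$ by $g_{k,n}^{1/2}h_{\alpha,n}^{(i)}g_{k,n}^{1/2}$, which enforces subordination via functional calculus without any Lipschitz hypothesis --- while carefully ensuring that within-color orthogonality and a uniform lower bound on the total sum both persist across all $n$. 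I expect this small-$n$ bookkeeping, rather than the asymptotic-dimension input itself (which enters only through the large-$n$ part), to be the most technical part of the argument.
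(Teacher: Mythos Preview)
Your route via the covering-dimension characterization is genuinely different from the paper's. The paper works directly with the definition of nuclear dimension: given a finite $\mathcal F=\{(f_{i,k})_k\}\subset\SV(X)$ and $\e>0$, it builds for each $n\in\N$ a $(d{+}1)$-coloured partition of unity $(e^{(i)}_j(n))$ coming from an asymptotic-dimension cover at scale $n$, and then assembles evaluate-and-spread maps
\[
\Psi=(\psi_{n_k})_k:\SV(X)\longrightarrow\bigoplus_{i=0}^d\prod_k l^\infty(J(n_k)),
\qquad
\Phi^{(i)}=(\phi^{(i)}_{n_k})_k,
\]
through an algebra of nuclear dimension zero. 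The crucial point is that the scale $n_k$ used at position $k$ is chosen \emph{adaptively}, large enough that each $f_{i,k}$ is $(\e/S(n_k))$-Lipschitz. This per-index adaptation is precisely what absorbs the non-uniformity in $k$ that worries you, and it lets the paper treat $\SV(X)$ first and deduce $\SV_\oomega(X)$ by passing to the quotient---the opposite order from yours.

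Your $\SV_\oomega(X)$ argument is sound, but the proposed repair for $\SV(X)$ at small $n$ does not close the gap. Replacing $h^{(i)}_{\alpha,n}$ by $g_{k,n}h^{(i)}_{\alpha,n}$ does secure $h_{i,k,n}\le g_{k,n}$ and preserves within-colour orthogonality, but then
\[
\sum_{i,k}h_{i,k,n}(x)=\sum_{i,\alpha}g_{k^*(i,\alpha,n),n}(x)\,h^{(i)}_{\alpha,n}(x),
\]
and without Lipschitz control on $g_{k,n}$ there is no reason $g_{k^*,n}(x)$ should be close to $g_{k^*,n}(y^{(i)}_{\alpha,n})$; the sum can vanish. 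More structurally, in your framework the small-$n$ problem is exactly the demand that an arbitrary finite cozero cover of $X$ admit a $(d{+}1)$-colourable cozero refinement, i.e.\ that $\dn C_b(X)\le d$. That holds when $X$ is discrete (where $C_b(X)=l^\infty(X)$ has nuclear dimension $0$, so a single colour suffices at those indices), and this is effectively what the paper's $n_k=0$ device is exploiting; but it is not something asymptotic dimension alone provides.
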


\begin{proof}
As the nuclear dimension decreases when passing to quotients, it suffices to prove the first statement.

Set $d:=\asdim(X)$.
Let $\mathcal F \subset \SV(X)$ be a finite set and let $\e>0$ be given.

Using the definition of asymptotic dimension in a fairly straightforward way, for each $n \in \N$, we may find an infinite partition of unity $(e_j^{(i)}(n))_{j\in J(n);\, i=0,\dots,d}$, such that:
\begin{enumerate}
\item for each $i$, $(e_j^{(i)}(n))_{j\in J(n)}$ are pairwise orthogonal,
\item each $e_j^{(i)}$ is $(1/n)$-Lipschitz, and
\item there is a uniform bound, $S(n)$, on the diameters of the supports of $e_j^{(i)}(n)$ (allowed to depend only on $n$).
\end{enumerate}
Let us also pick a point $x_j^{(i)}(n)$ inside the support of $e_j^{(i)}(n)$, for each $i,j,$ and $n$.
For $n=0$, define $J(0):=X$, $e_j^{(0)}(0)(x):=\delta_{j,x}$, $e_j^{(i)}(0):=0$ for $i>0$, $S(0):=0$, and $x_j^{(i)}(0)=j \in X$.

Define a $^*$-homomorphism $\psi_n = (\psi_n^{(0)},\dots,\psi_n^{(d)}):C_b(X) \to \bigoplus_{i=0}^d l^\infty(J(n))$ coordinatewise by evaluation at $x_j^{(i)}(n)$.
For $i=0,\dots,n$, define a $^*$-homomorphism $\phi_n^{(i)}:l^\infty(J(n)) \to C_b(X)$ by
\[ \phi_n^{(i)}(\lambda) = \sum_{j \in J(n)} \lambda(j)e_j^{(i)}(n), \]
with the sum converging pointwise, since in fact at each point $x \in X$, at most one summand is nonzero (by condition (i)).
Note that if $f \in C_b(X)$ is $(\e/S(n))$-Lipschitz then
\begin{equation}
\label{eq:DimNucApprox}
 \sum_{i=0}^d \phi_n^{(i)}\circ\psi_n^{(i)}(f) \approx_\e f.
\end{equation}

Let
\[ \mathcal F = \{(f_{i,k})_{k=1}^\infty : i=1,\dots,m\}. \]
For each $k$, we may find some $n_k \geq 0$ such that $f_{i,k}$ has $(\e/S(n_k))$-Lipschitz for all $i=1,\dots,m$.
Since $\mathcal F \subseteq \SV(X)$, we can pick these $n_k$ such that they converge to $\infty$.

We now define a $^*$-homomorphism
\[ \Psi := (\psi_{n_k})_{k=1}^\infty:l^\infty(\N,l^\infty(X)) \to \prod_k \bigoplus_{i=0}^d l^\infty(J(n_k)) \cong \bigoplus_{i=0}^d \prod_k l^\infty(J(n_k)); \]
we may write $\Psi=(\Psi^{(0)},\dots,\Psi^{(d)})$.
For $i=0,\dots,d$, define a $^*$-homomorphism
\[ \Phi^{(i)} := (\phi^{(i)}_{n_k})_{k=1}^\infty:\prod_k l^\infty(J(n_k)) \to l^\infty(\N,l^\infty(X)). \]
Since $n_k \to \infty$, we see that the image of $\Phi^{(i)}$ is in fact contained in $\SV(X)$.
Moreover, by \eqref{eq:DimNucApprox} and our choice of $n_k$, we find that
\[ \sum_{i=0}^d \Phi^{(i)}\circ\Psi^{(i)}(f) \approx_\e f \]
for $f \in \mathcal F$.
Since $\prod_k l^\infty(J(n_k))$ has nuclear dimension zero, this is sufficient to prove that $\SV(X)$ has nuclear dimension at most $d$.
\end{proof}
\bigskip

We have an argument to get inequalities in the other direction, under the hypothesis that $X$ has finite asymptotic dimension.
For this, we begin with the following lemma.

\begin{lemma}
\label{lem:dimNucTechnical}
Let $X$ be a set and let $\eta>0$.
Let $f_1,\dots,f_m, e_1,\dots,e_n:X \to [0,\infty)$, and $\lambda_{i,j} \in [0,\infty)$ for $i=1,\dots,n,\ j=1,\dots,m$.
Suppose that $f_j \approx_\eta \sum_{i=1}^n \lambda_{i,j} e_i$ for each $j$ and $\sum_{j=1}^m \lambda_{i,j} = 1$ for each $i$.
Then for each $i$, there exists $j(i)$ such that
\[ \{x: e_i(x) > m\eta\} \subseteq \{x : f_{j(i)}(x) > 0\}. \]
\end{lemma}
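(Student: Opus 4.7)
The plan is a short pigeonhole-plus-pointwise-estimate argument. First I would interpret $\approx_\eta$ via the sup-norm on functions as defined at the start of the paper, so that the hypothesis reads pointwise: $|f_j(x) - \sum_i \lambda_{i,j} e_i(x)| \leq \eta$ for all $x \in X$ and all $j$.

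Fix $i \in \{1,\dots,n\}$. Since $\sum_{j=1}^m \lambda_{i,j} = 1$, by pigeonhole there exists some $j(i) \in \{1,\dots,m\}$ with $\lambda_{i,j(i)} \geq 1/m$. This is the index I would select.

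Next, for any $x \in X$, using that every $e_k$ is nonnegative and every $\lambda_{k,j(i)}$ is nonnegative,
\[
f_{j(i)}(x) \;\geq\; \sum_{k=1}^n \lambda_{k,j(i)} e_k(x) - \eta \;\geq\; \lambda_{i,j(i)}\, e_i(x) - \eta \;\geq\; \tfrac{1}{m} e_i(x) - \eta.
\]
Therefore whenever $e_i(x) > m\eta$, we obtain $f_{j(i)}(x) > 0$, which is exactly the desired inclusion.

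I do not expect any real obstacle here: the only substantive move is the pigeonhole choice ensuring $\lambda_{i,j(i)} \geq 1/m$, which combined with nonnegativity of all the $e_k$ and $\lambda_{k,j(i)}$ lets us throw away the unwanted terms and keep only the $k=i$ contribution. The threshold $m\eta$ in the conclusion is tuned precisely so that $e_i(x)/m - \eta > 0$.
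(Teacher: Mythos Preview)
Your proof is correct and matches the paper's argument essentially line for line: pick $j(i)$ with $\lambda_{i,j(i)} \geq 1/m$ by pigeonhole, then use nonnegativity to bound $f_{j(i)}(x) \geq \tfrac{1}{m} e_i(x) - \eta$. The only cosmetic difference is that the paper phrases the final step contrapositively (assuming $f_{j(i)}(x)=0$ and deducing $e_i(x)\leq m\eta$), whereas you argue directly.
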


\begin{proof}
Fix $i$.
Since $\sum_{j=1}^m \lambda_{i,j} =1$, there exists some $j=j(i)$ such that $\lambda_{i,j} \geq 1/m$.
For $x \in X$ such that $f_{j}(x)=0$, it follows that
\begin{align*}
\frac1m e_i(x) &\leq \lambda_{i,j} e_i(x) \\
&\leq \sum_{i'=1}^n \lambda_{i',j} e_{i'}(x) \\
&\leq f_j(x)+\eta = \eta.
\end{align*}
This shows that $e_i(x) \leq m\eta$, as required.
\end{proof}
\bigskip

\begin{thm}
\label{thm:NucDim}
If $X$ has finite asymptotic dimension, then
\[ \asdim(X) = \dn\, \SV(X) = \dn\, \SV_\oomega(X). \]
\end{thm}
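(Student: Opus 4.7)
The argument uses a reduction followed by a construction-based proof of the remaining inequality. Nuclear dimension does not increase under passage to quotients, so $\dn \SV_\oomega(X) \leq \dn \SV(X)$; combined with Proposition \ref{prop:dimNucEasyDirection} this yields
\[ \dn \SV_\oomega(X) \leq \dn \SV(X) \leq \asdim(X). \]
The theorem therefore reduces to proving the reverse inequality $\asdim(X) \leq \dn \SV_\oomega(X)$.

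Suppose $d := \dn \SV_\oomega(X) < \infty$ and fix $R > 0$; the aim is to produce a cover of $X$ witnessing $\asdim(X) \leq d$ at scale $R$ (as in Definition \ref{def:AsympDim}). I would feed the hypothesis a finite family $\mathcal F = \{f_1,\dots,f_m\} \subset \SV_\oomega(X)$ of very Lipschitz sequences whose level-$n$ terms $f_{j,n}$ are $(1/n)$-Lipschitz positive contractions with bounded supports, designed so that the partition-of-unity hypothesis of Lemma \ref{lem:dimNucTechnical} can be arranged in the output. The hypothesis $\dn \SV_\oomega(X) \leq d$ with tolerance $\eta > 0$ provides a c.p.c.\ approximation $\mathrm{id}|_{\mathcal F} \approx_\eta \sum_{i=0}^d \phi^{(i)} \psi^{(i)}$ factoring through a finite-dimensional commutative algebra $F = F^{(0)} \oplus \cdots \oplus F^{(d)}$ (commutativity of $F$ is automatic since $\SV_\oomega(X)$ is commutative). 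Letting $p^{(i)}_k$ denote the minimal projections of $F^{(i)}$ and setting $e^{(i)}_k := \phi^{(i)}(p^{(i)}_k)$, the order-zero property ensures that the $e^{(i)}_k$ are positive contractions pairwise orthogonal within each colour $i$. Lift each $e^{(i)}_k$ to a sequence $(e^{(i)}_{k,n})_n$ of positive contractions in $C_b(X)$, preserving within-colour orthogonality asymptotically via a standard functional-calculus perturbation.

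For $n$ large enough, the approximation reads
\[ f_{j,n} \approx_{2\eta} \sum_{i,k} \lambda^{(i)}_{k,j}\, e^{(i)}_{k,n} \quad \text{in } C_b(X), \]
with $\lambda^{(i)}_{k,j}$ the coordinate of $p^{(i)}_k$ in $\psi^{(i)}(f_j)$. Lemma \ref{lem:dimNucTechnical} then confines each super-level set $\{e^{(i)}_{k,n} > 2m\eta\}$ inside some bounded set $\{f_{j(i,k),n} > 0\}$. Grouping by colour $i$ and using the within-colour orthogonality of the $e^{(i)}_{k,n}$ yields pairwise disjoint bounded sets per colour, upgradable to $R$-disjointness by passing to a smaller super-level threshold and exploiting the $(1/n)$-Lipschitz control to create a separation buffer of width $\geq R$ whenever $n$ is chosen large relative to $R$. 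The resulting $d+1$ colour classes furnish the desired asymptotic-dimension cover of $X$ at scale $R$.

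The main obstacle is the design of $\mathcal F$: its elements must simultaneously be very Lipschitz, have bounded supports at each level $n$, and generate enough ``mass'' to trigger the normalisation $\sum_j \lambda^{(i)}_{k,j} = 1$ required by Lemma \ref{lem:dimNucTechnical}. Since a finite family of bounded-support functions cannot sum to $1$ uniformly on an unbounded $X$, the partition-of-unity condition on $\mathcal F$ must be treated asymptotically in $n$, with the support scale $S_n$ of $f_{j,n}$ allowed to grow; at each fixed $n$ the extracted super-level sets remain uniformly bounded (by $\sim S_n$), which suffices. Coordinating the scales $S_n$, $\eta$, and $R$—and upgrading pairwise disjointness to $R$-disjointness via the buffer construction—forms the technical heart of the proof.
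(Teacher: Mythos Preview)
Your overall strategy matches the paper's: reduce to $\asdim(X)\leq d:=\dn\SV_\oomega(X)$, feed the nuclear-dimension approximation a finite test family $\mathcal F=\{f_1,\dots,f_m\}$, use commutativity to get the approximating algebra $F=\bigoplus_i\mathbb C^{s_i}$, apply Lemma~\ref{lem:dimNucTechnical} to trap super-level sets of the $e^{(i)}_k$ inside supports of the $f_j$, and use within-colour orthogonality plus Lipschitz control to get $R$-disjointness.

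The genuine gap is in the design of $\mathcal F$, and your last paragraph does not close it. You ask for each $f_{j,n}$ to have \emph{bounded} support (of diameter $S_n$), but then a finite sum $\sum_{j=1}^m f_{j,n}$ is supported on a bounded set for every $n$, so the normalisation $\sum_j\lambda^{(i)}_{k,j}=1$ required by Lemma~\ref{lem:dimNucTechnical} (coming from $\psi^{(i)}(\sum_j f_j)=\psi^{(i)}(1)$) cannot be arranged, even ``asymptotically in $n$''. Worse, even if you could trap each $\{e^{(i)}_{k,n}>c\}$ inside a single bounded set, these finitely many bounded sets could never cover the unbounded space $X$, so no asymptotic-dimension cover would emerge.

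The fix---and this is exactly where the hypothesis $\asdim(X)<\infty$ enters the paper's proof---is to take $m:=\asdim(X)+1$ and build $f_1,\dots,f_m\in\SV_\oomega(X)$ as an honest partition of unity, with the $n$th term $f_{j,n}$ supported not on a single bounded set but on an $n$-disjoint union of uniformly bounded sets (this is precisely what an asymptotic-dimension decomposition of $X$ at scale $n$ provides). Then $\sum_j f_j=1$ exactly, Lemma~\ref{lem:dimNucTechnical} applies, and at a fixed large level $l\geq R$ the set $\{e^{(i)}_{k,l}>m\eta\}$ lands inside $\mathrm{supp}(f_{j(i,k),l})$, which is already an $R$-disjoint union of uniformly bounded pieces. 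The cover is obtained by including $1$ in the approximation to force $\sum_{i,k} e^{(i)}_{k,l}>\tfrac12$ pointwise. You had all the other pieces in place; the missing idea is that ``bounded support'' must be replaced by ``support decomposing into a well-separated uniformly bounded family'', and producing such a \emph{finite} family in $\SV_\oomega(X)$ is exactly what finite asymptotic dimension buys you.
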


\begin{proof}
Set $d:=\dn\, \SV_\oomega(X)$.
By Proposition \ref{prop:dimNucEasyDirection}, it suffices to show that $\asdim(X) \leq d$.
Let $R>0$ be given, and we will partition $X$ into $(d+1)$ uniformly bounded, $R$-disjoint families.

By hypothesis, let $\asdim(X) \leq m-1$.
Then from this, there exists a partition of unity $g_1,\dots,g_m \in \SV_\oomega(X)$, such that $g_j=(g_{j,l})_{l=1}^\infty$ where for each $j,l$, the support of $g_{j,l}$ decomposes as an $l$-disjoint, uniformly bounded family of subsets of $X$. 

Set
\[ \eta := \frac1{3(d+1)m}. \]
The only nonzero order zero maps from a matrix algebra into a commutative algebra occur when the matrix algebra is one-dimensional (this follows from \cite[Proposition 3.2 (a)]{Winter:CovDim1}).
Hence, $\dn\, \SV(X) \leq d$ implies that there exists $s \in \mathbb N$, a c.p.c.\ map $\psi=(\psi^{(0)},\dots,\psi^{(d)}):\SV_\oomega(X) \to \bigoplus_{i=0}^d \mathbb C^{\oplus s}$, and c.p.c.\ order zero maps $\phi^{(i)}:\mathbb C^s \to \SV_\oomega(X)$ such that
\[  g_j \approx_\eta \sum_{i=0}^m \phi^{(i)}\circ\psi^{(i)}(g_j) \quad \text{and} \quad 1_{\SV_\oomega(X)} \approx_{1/2} \sum_{i=0}^m \phi^{(i)}\circ\psi^{(i)}(1). \]
By rescaling, we may assume that $\psi^{(i)}(1_{\SV_\oomega(X)})=(1,\dots,1)$.
Write $\psi^{(i)}(g_j) = (\lambda_{(i,1),j},\dots,\lambda_{(i,s),j}) \in [0,\infty)^s$ (since $\psi$ is positive).
By linearity, looking at the $i'$ component of $\psi^{(i)}(1)$, we have
\[ \sum_{j=1}^m \lambda_{(i,i'),j} = 1. \]

Each map $\phi^{(i)}$ lifts to a c.p.c.\ order zero map $(\phi^{(i)}_l)_{l=1}^\infty:\mathbb C^s \to \SV(X)$, by \cite[Remark 2.4]{KirchbergWinter:CovDim}.
For all but finitely many $l$, we have
\begin{align}
\notag
g_{j,l} &\approx_\eta \sum_{i=0}^d \phi^{(i)}_l(\lambda_{(i,1),j},\dots,\lambda_{(i,s),j}) \quad &&\text{and} \\
1_{C_b(X)} &\approx_{1/2} \sum_{i=0}^d \phi^{(i)}_l(1,\dots,1). 
\label{eq:dimNucCrucial1}
\end{align}

Fix $l \geq R$ for which \eqref{eq:dimNucCrucial1} holds, and such that the image of each minimal projection in $\mathbb C^s$ under $\phi^{(i)}_l$ is $(m\eta/L)$-Lipschitz.
Write $e_{(i,1)},\dots,e_{(i,s)}$ for these images under $\phi^{(i)}_l$ of the minimal projections in $\mathbb C^s$, and write $f_j:=g_{j,l}$; thus \eqref{eq:dimNucCrucial1} becomes
\begin{equation}
\label{eq:dimNucCrucial2}
 f_j \approx_\eta \sum_{i=0}^d \sum_{i'=1}^s \lambda_{(i,i'),j}e_{(i,i')} \quad \text{and}\quad 1 \approx_{1/2} \sum_{i=0}^d \sum_{i'=1}^s e_{(i,i')}. 
\end{equation}

We now apply Lemma \ref{lem:dimNucTechnical} with $(i,i'),\ i=0,\dots,d,\ i'=1,\dots,s$ in place of the index $i=1,\dots,n$.
This tells us that for each $i=0,\dots,d$ and $i'=1,\dots,s$, there exists some $j(i,i')$ such that
\[ B_{i,i'}:=\{x \in X: e_{(i,i')} > m\eta\} \subseteq \{x \in X : f_{j(i,i')}(x) >0\}. \]
Since the support of $f_{j(i,i')}$ ($= g_{j(i,i'),l}$) decomposes as a union of an $l$-disjoint uniformly bounded family, and $l \geq R$, we can partition $B_{i,i'}$ into an $R$-disjoint, uniformly bounded family, say
\[ B_{i,i'} = \coprod_{t\in T} A^{(i)}_{i',t} \]
Fixing $i \in \{0,\dots,d\}$, we now consider the family $(A^{(i)}_{i',t})_{i'=1,\dots,s,\ t \in T}$.
This family is a finite union of uniformly bounded families, whence it is uniformly bounded.
Let us check that it is $R$-disjoint.
Since for fixed $i'$ we already have $R$-disjointness of $(A^{(i)}_{i',t})_{t \in T}$, we need to show that for $i'_1 \neq i'_2$ and $t_1,t_2 \in T$, the minimal distance between $A^{(i)}_{i'_1,t_1}$ and $A^{(i)}_{i'_2,t_2}$ is at least $R$.
In other words, we need to show that the minimal distance between $B_{i,i'_1}$ and $B_{i,i'_2}$ is at least $R$.

We have that $e_{i,i'_1}$ and $e_{i,i'_2}$ are orthogonal, so if $x \in B_{i,i'_1}$ then since $e_{i,i'_1}(x) \neq 0$, it must be the case that $e_{i,i'_2}(x)=0$.
Consider now $y \in B_{i,i'_2}$, so that $e_{i,i'_2}(y) > m\eta$.
Since $e_{i,i'_2}$ is $(m\eta/R)$-Lipschitz, it follows that $d(x,y) \geq R$, as required.

Finally let us show that 
\[ (A^{(i)}_{i',t})_{i=0,\dots,d,\ i'=1,\dots,s,\ t\in T} \]
is a cover of $X$, i.e., that $X=\bigcup_{i,i'} B_{i,i'}$.
For $x \in X$, from the second part of \eqref{eq:dimNucCrucial2}, we have
\[ 1/2 <  \sum_{i=0}^d \sum_{i'=1}^s e_{(i,i')}(x). \]
At most $d+1$ terms in this sum are nonzero, due to the pairwise orthogonality withing each family $(e_{(i,i')})_{i'=1}^s$.
Therefore, there exists some $i,i'$ such that $e_{(i,i')}(x) > \frac1{2(d+1)} = m\eta$.
Thus, $x \in B_{i,i'}$ (by definition) as required.
\end{proof}
\bigskip

We have the following consequence.

\begin{cor}
\label{cor:dimNuc}
Suppose $X$ is a metric space, and for all $m\in \mathbb N$, $X$ contains a subspace $Y_m$ such that $\asdim(Y_m) \in [m,\infty)$.
Then 
\[ \dn\, \SV(X) = \dn\, \SV_\oomega(X) = \infty. \]
\end{cor}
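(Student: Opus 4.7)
The plan is to leverage Theorem \ref{thm:NucDim} on each subspace $Y_m$ and transfer the resulting lower bound on nuclear dimension back to $X$ via a surjective restriction map. Since $\asdim(Y_m) \in [m,\infty)$ is finite, Theorem \ref{thm:NucDim} applies to $Y_m$ and yields $\dn\, \SV(Y_m) = \dn\, \SV_\oomega(Y_m) = \asdim(Y_m) \geq m$. It therefore suffices to produce surjective $^*$-homomorphisms $\SV(X) \to \SV(Y_m)$ and $\SV_\oomega(X) \to \SV_\oomega(Y_m)$; since nuclear dimension does not increase under quotients, this will give $\dn\, \SV(X), \dn\, \SV_\oomega(X) \geq m$ for every $m$, and hence both are infinite.

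The natural candidate for the homomorphism is the pointwise restriction map $f \mapsto f|_{Y_m}$. Restricting an $L$-Lipschitz function to $Y_m$ produces an $L$-Lipschitz function, so restriction sends bounded very Lipschitz sequences on $X$ to bounded very Lipschitz sequences on $Y_m$, inducing the desired maps at the level of both $\SV$ and $\SV_\oomega$. For surjectivity I would use McShane-type extension. Given a bounded very Lipschitz sequence $(g_n)_{n=1}^\infty$ on $Y_m$ with $\|g_n\| \leq M$ and $g_n$ eventually $L_n$-Lipschitz (with $L_n\to 0$), I would split each $g_n = u_n + iv_n$ into real and imaginary parts, extend each real part via the formula $\tilde u_n(x) := \inf_{y \in Y_m}(u_n(y) + L_n\,d(x,y))$ (and similarly for $v_n$), and clip the result to $[-M,M]$. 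This produces an $L_n$-Lipschitz bounded extension of each part, so $\tilde g_n := \tilde u_n + i\tilde v_n$ is a bounded $\sqrt{2}L_n$-Lipschitz extension of $g_n$; the sequence $(\tilde g_n)$ is therefore in $\SV(X)$, and restricts back to $(g_n)$.

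Once surjectivity of the restriction map is established, the conclusion is immediate by the quotient-monotonicity of nuclear dimension. The only subtlety is the clipping/complex-splitting needed to keep the extensions both bounded and very Lipschitz, but this is routine; I do not anticipate any serious obstacle.
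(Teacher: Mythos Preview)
Your approach is essentially the paper's: restrict to $Y_m$, use McShane extension to see that the restriction map on very Lipschitz sequences is onto, then invoke Theorem~\ref{thm:NucDim} and the fact that nuclear dimension does not increase under quotients. One small point: your argument for surjectivity of $\SV(X)\to\SV(Y_m)$ only handles the tail where $g_n$ is actually Lipschitz, and the finitely many early terms of a very Lipschitz sequence need not be Lipschitz (nor extend continuously from an arbitrary $Y_m\subseteq X$); the paper sidesteps this by only claiming surjectivity at the level of $\SV_\oomega$, and then uses that $\SV_\oomega(X)$ is itself a quotient of $\SV(X)$ to get both conclusions at once.
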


\begin{proof}
It is not too hard to see that restriction to $Y_m$ produces a surjective $^*$-homomorphism $\SV_\oomega(X) \to \SV_\oomega(Y_m)$ (for surjectivity, the key point is that an $L$-Lipschitz function on a closed subset of $Y_m$ extends to an $L$-Lipschitz function on $X$).
Hence we have 
\[ \dn\, \SV_\oomega(X) \geq \dn\, \SV_\oomega(Y_m) \geq m, \]
using \cite[Proposition 2.3(iv)]{WinterZacharias:NucDim} for the first inequality and Theorem \ref{thm:NucDim} for the second.
\end{proof}
\bigskip

In \cite[Theorem 7.2]{Dranishnikov:AsympTop}, it was shown that the asymptotic dimension of $X$ is equal to the covering dimension of the Higson corona $\nu X$, \emph{likewise provided that $\asdim(X) < \infty$}.

\begin{question}
Is $\dn \SV(X) = \asdim(X)$ always?
Is $\dn \SV(X) = \dim(\nu X)$ always?
\end{question}

In light of Corollary \ref{cor:dimNuc} and \cite[Theorem 7.2]{Dranishnikov:AsympTop}, the above question is only open in the case of a metric space $X$ of infinite dimension, which does not contain subspaces of arbitrarily large finite dimension.

\newcommand{\cstar}{C*}

\end{document}